\documentclass{amsart}

\usepackage{amsthm, amsfonts, amssymb, amsmath, latexsym, enumerate, times}
\usepackage{amscd}
\usepackage{xypic}  
\usepackage{amssymb}
\usepackage{amsthm}
\usepackage{epsfig}
\usepackage{paralist}
\usepackage{lmodern}
\usepackage[T1]{fontenc}
\usepackage[utf8]{inputenc}
\usepackage{mathrsfs}
\usepackage{array}
\usepackage{comment}
\usepackage{paralist}
\usepackage{color}

\newtheorem{theorem}{Theorem}[section]
\newtheorem{lemma}[theorem]{Lemma}
\newtheorem{claim}[theorem]{Claim}

\newtheorem{proposition}[theorem]{Proposition}

\theoremstyle{definition}

\newtheorem{remark}[theorem]{Remark}

 

\renewcommand{\O}{{\mathcal O}}

\newcommand{\p}{{\mathbb P}}





\def\geq{\geqslant}
\def\leq{\leqslant}

\begin{document}
 
\title[On the rationality of certain Fano threefolds]{On the rationality of certain Fano threefolds}

\author{Ciro Ciliberto}
\address{Dipartimento di Matematica, Universit\`a di Roma Tor Vergata, Via O. Raimondo 00173 Roma, Italia}
\email{cilibert@mat.uniroma2.it}

\subjclass{Primary 14E08, 14E05; Secondary 14J30, 14J45, 14M20, 14N05}
 
\keywords{Fano varieties, rational varieties}
 
\maketitle

\medskip

\begin{abstract}  In this paper we study the rationality problem for  Fano threefolds $X\subset \p^{p+1}$ of genus $p$,  that are Gorenstein, with at most canonical singularities. The main results are: (1) a trigonal Fano threefold of genus $p$ is rational  as soon as $p\geq 8$ (this result has already been obtained in \cite {PCS}, but we give here an independent  proof); (2) a non--trigonal Fano threefold of genus $p\geq 7$ 
 containing a plane is rational; (3) any Fano threefold of genus $p\geq 17$ is rational; (4) a Fano threefold of genus $p\geq 12$ containing an ordinary line $\ell$ in its smooth locus is rational.\end{abstract}

\section{Introduction} The problem of rationality of varieties with ample anticanonical divisor is a classical one that goes back more than one century ago with the work of Gino Fano. This problem has been  solved only for smooth Fano varieties of dimension 3 (see \cite {IP}). From the modern point of view of the minimal model programme, it is however important to consider also singular varieties. The rationality problem for singular Fano threefolds has been considered in a series of papers by Yuri Prokorov. 

Let $X\subset \p^{p+1}$ be a Fano threefold of \emph{genus} $p$, i.e., a Gorenstein threefold whose hyperplane divisor is anticanonical, whose general hyperplane section is thus a K3 surface and its general curve section is a canonical curve of genus $p$.

In \cite {P1,P2} Prokhorov classifies non--rational Fano threefold with at worst terminal Gorenstein singularities, rank of the Picard group equal to 1 and genus $p\geq 5$.  The outcome of his classification is essentially that such non--rational Fano threefolds have genus $p$ bounded by 9. In the earlier paper  \cite {P0} Prokhorov considered more generally Gorenstein Fano threefold with at worst  canonical  singularities, and he proved that, if such a variety has at least one non--compound Du Val singular point, then it is rational, except for a few cases that he fully described.

In the present paper, inspired by some ideas of Fano (see \cite {Fa}), we consider the rationality problem in the general case of Gorenstein Fano threefold with at most  canonical  singularities, and no assumption on the Picard group. We prove the following results. After \S \ref {sec:prel} in which we collect some preliminaries,  in \S \ref {sec:trig} we  prove that  any  trigonal Fano threefold of genus $p\geq 8$ is rational (see Theorem \ref {thm:trig}). This result can be deduced from the classification of trigonal Gorenstein Fano threefolds with at most  canonical singularities given in \cite {PCS}. Our proof here however does not resort to this classification, is rather fast and conceptually easy and relies only on projective geometric arguments. Then in \S \ref {sec:plane} we prove that any  Gorenstein Fano threefold of genus $p\geq 7$ with at most  canonical singularities, that is non--trigonal and contains a plane is rational (see Theorem \ref {thm:propl}). The rationality question of Fano threfolds of genus $p\leq 6$ containing a plane is very interesting and it is  discussed at the end of \S  \ref {sec:plane}, in which we give various examples. Finally in \S \ref {sec:appl} we give two applications of Theorem \ref {thm:propl}. The first one is the quite general Theorem \ref {thm:ratbig} to the effect that any non--trigonal Fano threefold of genus $p\geq 17$ is rational. The second one is Theorem \ref {thm:line} that says that any Fano threefold of genus $p\geq 12$ containing an ordinary line in its smooth locus is rational (for the definition of \emph{ordinary line} see \S \ref {ssec:lines} below). 

As said, our proofs are inspired by beautiful and very geometric ideas of Fano's in \cite {Fa}, although Fano's original arguments are incomplete and needed serious refinements.   \medskip

{\bf Acknowledgements:} The author is a member of GNSAGA of the Istituto Nazionale di Alta Matematica ``F. Severi''. The author wants to thank Ivan Cheltsov and Yuri Prokhorov for  useful e--mail exchanges of ideas on the topic of this paper.

\section{Preliminaries} \label{sec:prel}

\subsection{Fano threefolds} In this paper we will consider \emph{Fano threefolds} $X$, i.e., $X$  is an irreducible, Gorenstein variety of dimension three, with at most canonical singularities, with ample  anticanonical (Cartier) divisor $-K_X$ such that the linear system $|-K_X|$ is base point free. 
We  set
$$
p=\frac {-K_X^3}2+1
$$
that is an integer, called the \emph{genus} of $X$. Then $\dim(|-K_X|)=p+1$ and the general surface in $|-K_X|$ is a K3 surface with at worst Du Val singularities. We will mainly focus on the case in which $-K_X$ is very ample, so that the morphism associated to the linear system $|-K_X|$ linearly normally embeds $X$ as a non--degenerate variety into $\p^{p+1}$. Then the general curve section of $X$ is a canonical curve of genus $p$ and  $\deg(X)=2p-2$. In this situation $X$ is arithmetically Gorenstein. The hypothesis that $X$ has at most canonical singularities implies that $X$ is not a cone.

The following proposition will be useful later:

\begin{proposition}\label{prop:flik} Let $X\subset \p^r$ be an irreducible, linearly normal, projective threefold such that its general hyperplane section is a K3 surface with at worst Du Val singularities and its general curve section is canonically embedded. Then $X$ is normal, Gorenstein, $\mathcal O_X(K_X)\cong \mathcal O_X(-1)$ and either $X$ has only rational singularities, in which case it is an anticanonically embedded Fano threefold, or it is a cone.  
\end{proposition}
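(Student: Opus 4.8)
**The plan is to establish the conclusions in order: first normality, then the Gorenstein/canonical-bundle statement, then the dichotomy on singularities.**

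First I would address normality. Since the general hyperplane section $S = X \cap H$ is a K3 surface with at worst Du Val singularities, $S$ is in particular normal, and indeed has rational singularities. One then bootstraps from sections: the general curve section $C = X \cap H_1 \cap H_2$ is a canonical curve, hence irreducible and in particular reduced and of pure dimension one. Using the standard fact (a form of Bertini plus a cohomological/depth argument) that if a general hyperplane section of a variety is normal and the variety is irreducible, then the variety itself satisfies $S_2$ and $R_1$, I would conclude $X$ is normal. More precisely, normality of $S$ and the fact that $X$ is Cohen--Macaulay in codimension appropriate ranges propagates upward; alternatively, one invokes that linear normality together with the arithmetic Gorenstein-ness of the curve section $C\subset\p^{p-1}$ (a canonical curve is projectively normal and arithmetically Gorenstein by Max Noether / Petri) forces $X$ to be arithmetically Cohen--Macaulay, and $R_1$ follows from the fact that the singular locus of $X$ meets a general $\p^{r-2}$ in the singular locus of $S$, which has codimension $\geq 2$ in $S$.

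Next, for the Gorenstein property and $\O_X(K_X)\cong\O_X(-1)$: since $X$ is arithmetically Cohen--Macaulay of dimension three in $\p^r$, adjunction on a general hyperplane section gives $\omega_S \cong \omega_X(1)|_S$; but $\omega_S\cong\O_S$ because $S$ is K3. Iterating, $\omega_C \cong \omega_X(2)|_C$, and $\omega_C\cong\O_C(1)$ because $C$ is canonically embedded, so $\omega_X(2)|_C \cong \O_C(1)$, i.e. $\omega_X(1)|_C\cong\O_C$. Since the restriction map $\Pic(X)\to\Pic(C)$ is injective for a sufficiently ample/general curve section (Noether--Lefschetz type, or simply because $\omega_X(1)$ is numerically trivial on curves and $X$ is ACM so $H^1(X,\O_X)$ vanishes), this upgrades to $\omega_X\cong\O_X(-1)$ as line bundles; in particular $\omega_X$ is invertible, so $X$ is Gorenstein, and $-K_X$ is a very ample Cartier divisor.

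Finally the dichotomy. We now know $X$ is normal Gorenstein with $-K_X$ ample and very ample. The question is whether the singularities of $X$ are rational (equivalently, since $X$ is Gorenstein, canonical). I would argue: take a general $S\in|-K_X|=|\O_X(1)|$; by the generality and the hypothesis, $S$ is K3 with Du Val singularities, hence $S$ has rational singularities. A singularity of $X$ through which the general such $S$ passes with $S$ having only a rational (Du Val) singularity there is itself a rational singularity of $X$ — this is the key local step, using that a generic Gorenstein hyperplane section having rational (canonical) singularities forces the ambient to have canonical singularities (inversion of adjunction, or the Elkik--type statement that canonical singularities deform/restrict well). The only way this can fail is if the singular point of $X$ is \emph{not} contained in the general member of the base-point-free system $|-K_X|$ — but a base point of... no: if $|-K_X|$ is base-point free, every point lies on some member, but not necessarily the general one only if the point is a base point, which cannot happen. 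The genuine exception is when $X$ is a \emph{cone}: then the vertex forces $|{-}K_X|$ to have all members singular at the vertex in a non-Du-Val way, and one cannot run the argument. So either every singularity is canonical — and then $-K_X$ ample Gorenstein canonical means $X$ is an anticanonically embedded Fano threefold in the sense of the paper — or $X$ is a cone.

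\medskip

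\textbf{The main obstacle} I anticipate is the last step: cleanly separating the "cone" case from the "rational singularities" case, i.e. showing that the \emph{only} obstruction to the general anticanonical section controlling the singularities of $X$ is that $X$ be a cone. This requires knowing that if $X$ is not a cone then the general member of $|{-}K_X|$ can be taken with at worst Du Val singularities exactly at (and not worse than) the singular locus of $X$, which is where the hypothesis on the general hyperplane section must really be leveraged together with a Bertini argument on the moving part; controlling the behavior of $|{-}K_X|$ at the (possibly positive-dimensional) singular locus of $X$, and ruling out that a general section acquires a worse-than-canonical singularity there unless there is a cone structure, is the delicate point. The normality and canonical-bundle computations, by contrast, are essentially formal consequences of arithmetic Cohen--Macaulayness plus adjunction and should go through routinely.
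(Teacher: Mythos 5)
Your first two steps (normality, arithmetic Cohen--Macaulayness, and the adjunction computation giving $\omega_X\cong\O_X(-1)$, hence the Gorenstein property) are essentially the content of the result of Conte and Murre that the paper invokes for exactly this part, and your sketch of how to propagate projective normality and dualizing sheaves up from the canonical curve section is in the right spirit, even if the $S_2$/$R_1$ bootstrap would need to be written out carefully. The identification ``rational $\Leftrightarrow$ canonical'' for Gorenstein singularities is also fine.

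The genuine gap is in the final dichotomy, and it is precisely where you yourself flag an ``obstacle.'' Your local argument (Elkik-type inversion of adjunction: a Cartier divisor through $P$ with rational singularities forces $X$ to have a rational singularity at $P$) only applies to singular points of $X$ that actually lie on the \emph{general} hyperplane section, i.e.\ to the positive-dimensional part of $\Sing(X)$. An \emph{isolated} singular point $P$ of $X$ is avoided by the general member of $|-K_X|$, so the hypothesis that the general hyperplane section is a K3 with Du Val singularities says nothing about the general member of $|-K_X|$ \emph{through} $P$, which may well be worse than Du Val; base-point-freeness does not help here, and your parenthetical attempt to dismiss this case is where the argument breaks. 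The assertion that ``the genuine exception is when $X$ is a cone'' is exactly the statement to be proved, and it is not a Bertini-type formality: the paper disposes of it by citing Ishii's theorem that a quasi-Gorenstein Fano threefold with (isolated) non-rational locus is a cone, which is a substantive classification result, not something recoverable from the restriction/deformation behaviour of canonical singularities alone. As written, your proposal assumes this key input rather than proving or citing it, so the proof is incomplete at the one step that carries the real content of the dichotomy.
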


\begin{proof} By \cite [Prop. 1.2]{CM}, $X$ is normal, Gorenstein, and $\mathcal O_X(K_X)\cong \mathcal O_X(-1)$. If $X$ has non--rational singularities, that $X$ is a cone by \cite {Is}. If $X$ has rational singularities, then $X$ has canonical singularities (see \cite [Introduction]{P01}) and it is a Fano threefold. \end{proof}

\subsection{Hyperelliptic Fano threefolds}

Let $X$ be a Fano threefold as above but $-K_X$  is not very ample. In this case $X$ will be called \emph{hyperelliptic} because the curve $C$, intersection of two general elements of $|-K_X|$,  is hyperelliptic. The hyperelliptic Fano threefolds have been classified in \cite [Thm. 1.5] {PCS}.  Typical examples of such threefolds are  double covers of a rational normal scroll threefold $V$ of degree $p-1$ in $\p^{p+1}$, branched along a (Weil) divisor of a linear system of the form
$4H-2(p-3)L$, where $H$ is the hyperplane class of $V$ and $L$ is a plane generator  of the scroll. 
We will need the following result which follows from \cite [Thm. 1.5, Prop. 1.10] {PCS}:

\begin{theorem}\label{thm:lops} Let $X$ be a hyperelliptic Fano threefold of genus $p\geq 10$. Then $X$ is rational. 
\end{theorem}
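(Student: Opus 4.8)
The plan is to reduce to the classification \cite[Thm. 1.5]{PCS} of hyperelliptic Gorenstein Fano threefolds with canonical singularities. By that result, the anticanonical map exhibits $X$ as a double cover $\pi\colon X\to W$ of a three--dimensional variety $W\subset\p^{p+1}$ of minimal degree $p-1$, branched along a divisor $B$ in the linear system $|4H-2(p-3)L|$, where $H$ is the hyperplane class of $W$ and $L$ a generating plane. For $p\geq 10$ one has $\deg W=p-1\geq 9$, so $W$ is a (possibly singular) three--dimensional rational normal scroll: the cone over the Veronese surface (degree $4$) and $\p^3$ (degree $1$) are excluded. Every rational normal scroll is rational, so it remains only to prove that this particular double cover is rational; this is \cite[Prop. 1.10]{PCS}, and what follows is a sketch of the geometry behind it.

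Composing $\pi$ with the ruling $W\to\p^1$, whose fibres are the planes $L\cong\p^2$, gives a fibration $f\colon X\to\p^1$ whose general fibre is the double cover of $\p^2$ branched along $B|_L\in|\O_{\p^2}(4)|$, i.e.\ a (weak) del Pezzo surface of degree $2$. The key observation is that for $p\geq 10$ the class $4H-2(p-3)L$ is so negative in the direction of the ruling that every effective member $B$ is forced to be very degenerate. Indeed, restricting $4H-2(p-3)L$ to a directrix sub--scroll surface $Y\cong\mathbb F_e$ of $W$ (or, in the cone cases, to the divisor of $W$ lying over the singular locus) yields a divisor class which is not effective, so $Y\subseteq B$; hence the branch quartic $B|_L$ of a general fibre of $f$ contains the line $Y|_L$, and therefore splits off at least one line. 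Working this out precisely for each of the scrolls that actually occur in \cite[Thm. 1.5]{PCS} for $p\geq 10$ is the bookkeeping part of the proof.

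To conclude I would project $X$ from a suitable centre: for a smooth scroll, from the span of the minimal directrix curve $D_1\subset W$ (which is contained in $B$, by the previous step), and for a cone, from its vertex locus. Such a projection presents $W$, birationally, as a $\p^1$--bundle over a rational surface (a surface scroll), and the degeneracy of $B$ forces the induced fibration of $X$ over that surface to have rational --- rather than elliptic --- fibres; a rational section is provided by the ramification locus of $\pi$ over the distinguished centre (which is rational, $\pi$ being branched there). Thus a birational model of $X$ is a conic bundle over a rational surface with a section, hence rational. The main obstacle is exactly this last argument combined with the case analysis of the previous paragraph: one must read off from \cite[Thm. 1.5]{PCS} which scrolls and branch data occur for $p\geq 10$ and verify the required splitting of $B$ and the existence of the section in each case --- precisely what is done in \cite[Prop. 1.10]{PCS}, which it is therefore cleanest simply to quote.
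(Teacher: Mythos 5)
Your proposal is correct and takes the same route as the paper: the paper offers no independent proof of this statement, deriving it directly from \cite[Thm.~1.5, Prop.~1.10]{PCS}, which is exactly the citation your argument reduces to. The additional sketch of the double-cover geometry and the degeneracy of the branch divisor is a plausible gloss on what happens inside \cite{PCS}, but the operative content of both your proof and the paper's is the appeal to those two results.
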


\subsection{The ideal of a Fano threefold} The following result is well known and it is a  consequence of the classical Enriques--Petri theorem for canonical curves (see, e.g.,  \cite [Thm. 2.14] {PCS}):

\begin{proposition} \label{prop:EP} Let $X\subset \p^{p+1}$ be an anticanonically embedded Fano threefold of genus $p$. Then either the ideal of $X$ is generated by quadrics or one of the following happens:\\ \begin{inparaenum}
\item [(i)] $p=3$ and $X$ is a quartic fourfold in $\p^4$;\\
\item [(ii)] $p=4$ and $X$ is a complete intersection of a quadric and a cubic in $\p^5$;\\
\item [(iii)] $p\geq 5$, any smooth curve section of $X$ is trigonal, the ideal of $X$ is generated by quadrics and cubics, the intersection of all quadrics containing $X$ is a 4--dimensional variety $V$ of minimal degree $p-2$ that is swept out by a 1--dimensional rational family $\mathcal F$ of 3--dimensional linear spaces and $X$ is cut out on $V$ by a cubic hypersurface containing  $p-4$ linear spaces of the family $\mathcal F$;\\
\item [(iv)] $p=6$, any smooth curve section of $X$ is the canonical image of a smooth plane quintic, the ideal of $X$ is generated by quadrics and cubics, the intersection of all quadrics containing $X$ is a 4--dimensional variety $V$ of minimal degree $4$ that is a cone with vertex a line $\ell$ over a Veronese surface of degree 4 in $\p^5$, and $X$ is cut out on $V$ by a cubic hypersurface containing a rank $3$ quadric on $V$ that is the cone over a conic of the Veronese surface with vertex $\ell$. 
\end{inparaenum}
\end{proposition}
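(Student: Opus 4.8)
The plan is to reduce the statement to the classical Enriques--Petri theorem for canonically embedded curves, applied to a general curve section $C$ of $X$, and then to lift the resulting description back to $X$ via the fact that $X$ is arithmetically Gorenstein. Let $\Lambda\cong\p^{p-1}$ be a general linear subspace of $\p^{p+1}$ and set $C=X\cap\Lambda$; by hypothesis $C$ is a canonically embedded curve of genus $p$, and it is smooth and irreducible because $\Lambda$ has codimension $2$ and misses the (at most $1$--dimensional) singular locus of $X$. Since $X$ is arithmetically Gorenstein, hence arithmetically Cohen--Macaulay, a general linear form is a nonzerodivisor on its homogeneous coordinate ring, so passing to a general hyperplane section preserves minimality and exactness of the graded free resolution; going from $X$ to a general K3 surface section and then to $C$ therefore leaves the graded Betti numbers unchanged. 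In particular $I_X$ has minimal generators in the same degrees as $I_C$, so $I_X$ is generated by quadrics, resp.\ by quadrics and cubics, exactly when $I_C$ is. Moreover projective normality of $X$ shows that every form of degree $d$ on $\Lambda$ vanishing on $C$ is the restriction of a form of degree $d$ on $\p^{p+1}$ vanishing on $X$; hence the subvariety of $\p^{p+1}$ cut out by the degree $d$ part of $I_X$ meets $\Lambda$ precisely in the subvariety of $\Lambda$ cut out by the degree $d$ part of $I_C$.

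Next apply the Enriques--Petri theorem to $C$: one of the following holds. (a) $p=3$ and $C\subset\p^2$ is a plane quartic. (b) $p=4$ and $C\subset\p^3$ is the complete intersection of a quadric and a cubic. (c) $I_C$ is generated by quadrics. (d) $p\geq5$, $C$ is trigonal, $I_C$ is generated by quadrics and cubics, the quadrics through $C$ cut out the $2$--dimensional rational normal scroll $S\subset\p^{p-1}$ of degree $p-2$ containing $C$, and $C\in|3H-(p-4)F|$ on $S$, where $H$ and $F$ are the hyperplane and ruling classes. (e) $p=6$, $C$ is the canonical image of a smooth plane quintic, $I_C$ is generated by quadrics and cubics, and the quadrics through $C$ cut out the Veronese surface $\mathcal{V}\subset\p^5$ of degree $4$.

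In case (a) the first paragraph shows that $X\subset\p^4$ is arithmetically Cohen--Macaulay of codimension $1$ and degree $4$, hence a quartic hypersurface, which is (i); in case (b) the unique quadric and cubic in $I_X$ realize $X\subset\p^5$ as the complete intersection of a quadric and a cubic, which is (ii); case (c) is the first alternative of the statement. In cases (d) and (e), let $V\subset\p^{p+1}$ be the subvariety cut out by the quadrics in $I_X$. By the first paragraph the general $2$--dimensional linear section $V\cap\Lambda$ is $S$, resp.\ $\mathcal{V}$, so $V$ is irreducible, non--degenerate, of dimension $4$ and degree $p-2$; thus $\deg V=\codim V+1$ and $V$ has minimal degree. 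By the classical classification of varieties of minimal degree, $V$ is then either a $4$--dimensional rational normal scroll --- possibly a cone over a scroll of smaller dimension, but in any case swept out by a $1$--dimensional rational family $\mathcal{F}$ of linear $\p^3$'s --- or a cone over the Veronese surface; since $V\cap\Lambda$ equals $S$, which is ruled by lines, in case (d), and equals $\mathcal{V}$, which is not, in case (e), the first possibility occurs exactly when $C$ is trigonal and the second exactly for $C$ a plane quintic, the vertex then being a line $\ell$. This identifies $V$ as in (iii) and (iv). Finally, $I_X$ being generated by quadrics and cubics, $X$ is the subscheme cut out on $V$ by the cubics in $I_X$; a computation of linear equivalence classes on $V$, using $\deg X=2p-2$ and the fact that the ruling of $V$ restricts to the $g^1_3$ on the curve sections in case (d) and to the pencil of conics of $\mathcal{V}$ in case (e), shows that $X$ lies in the linear system of cubics through $p-4$ members of $\mathcal{F}$ in case (iii), resp.\ through the rank $3$ quadric on $V$ that is the cone over a conic of $\mathcal{V}$ with vertex $\ell$ in case (iv), and that already one such cubic, together with the quadrics, cuts out $X$.

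The crux, and the only delicate point, is the reduction carried out in the first paragraph: one must exploit that $X$ is arithmetically Cohen--Macaulay so that general hyperplane sections behave well at the level of minimal free resolutions, and one must keep track not merely of the abstract Betti numbers but of the geometry, namely that the intersection $V$ of the quadrics through $X$ degenerates, under the two general linear sections, to the quadric hull of $C$ through a family of varieties of minimal degree; this is what makes the classification applicable to $V$ itself. Case (iv) demands extra care, since there $V$ is singular along $\ell$ and $X$ is not a Cartier divisor on it, so the assertion that $X$ is ``cut out by a cubic on $V$'' has to be interpreted together with the residual rank $3$ quadric cone, and pinning down that residual component is the most laborious bookkeeping. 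The low--genus identifications (i) and (ii), and the classification of varieties of minimal degree, are entirely standard.
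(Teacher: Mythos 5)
The paper gives no proof of this proposition, citing it as a well-known consequence of the Enriques--Petri theorem (via \cite[Thm.~2.14]{PCS}), and your argument is precisely the standard derivation that citation alludes to: reduce to a general canonical curve section using that $X$ is arithmetically Gorenstein, apply Enriques--Petri, and lift the quadric hull back to a $4$--dimensional variety of minimal degree. The sketch is correct in substance and consistent with how the paper uses the result (e.g.\ the class $X\sim 3H-(p-4)\Pi$ on $V$ in the proof of Theorem \ref{thm:trig}), so no further comment is needed.
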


The Fano threefolds of type (ii) and (iii) in the above proposition are said to be \emph{trigonal}. The Fano threefolds of type (iv) are said to present the \emph{plane quintic case}.
Note that, as soon as $p\geq 5$,  a trigonal Fano threefold is swept out by a pencil $\mathcal P$ of cubic surfaces contained in the 3--dimensional linear spaces of the family $\mathcal F$. 

In the paper \cite {PCS} there is a full classification of trigonal  Fano threefolds: there are  67 types of them.

\subsection{Reducible hyperplane sections of a Fano threefold} 

\begin{proposition}\label{prop:reduc}
Let $X\subset \p^{r}$ be an anticanonically embedded  Fano threefold. Let $S_0$ be a hyperplane section of $X$ that is reducible as $S_0=A+B$, with $A,B$ irreducible, distinct and reduced surfaces, with $A$ rational. Then $B$ is also rational. Moreover, if $C$ is  the intersection curve $C$ of $A$ and $B$, then its strict transform on the minimal desingularizations of $A$ and $B$ is anticanonical.  
\end{proposition}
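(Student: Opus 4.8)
The plan is to exploit that $S_0$, being a member of $|-K_X|$ on the Gorenstein threefold $X$, is a reduced (since $A$ and $B$ are distinct and each occurs with multiplicity one) and connected (ample hyperplane section) Gorenstein surface with
\[ \omega_{S_0}\;\cong\;\O_X(K_X+S_0)|_{S_0}\;\cong\;\O_{S_0} \]
by adjunction. From $0\to\O_X(K_X)\to\O_X\to\O_{S_0}\to 0$ and the vanishings $H^1(\O_X)=H^2(\O_X)=0$ (valid because the canonical, hence rational, singularities of the Fano $X$ satisfy Kawamata--Viehweg vanishing) one reads off $h^0(\O_{S_0})=h^2(\O_{S_0})=1$ and $h^1(\O_{S_0})=0$, so $\chi(\O_{S_0})=2$. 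Writing $C=A\cap B$, the Mayer--Vietoris sequence
\[ 0\to\O_{S_0}\to\O_A\oplus\O_B\to\O_C\to 0 \]
then forces $h^0(\O_C)=1$, so $C$ is connected, and it relates the three Euler characteristics. The other basic input is adjunction along the components: from $A+B\sim -K_X$ and the adjunction formula for the reducible Gorenstein surface $S_0=A\cup B$ one obtains $\omega_A\cong\O_A(-C)$ and $\omega_B\cong\O_B(-C)$, that is, $C$ is an anticanonical curve on each of $A$ and $B$.

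For the rationality of $B$: since $\omega_B^{[m]}\cong\O_B(-mC)$ is anti-effective for every $m\ge 1$, all plurigenera of $B$ vanish, and pulling back to the minimal desingularization $\widetilde B$ only decreases them, so every plurigenus of $\widetilde B$ is zero; in particular $p_g(\widetilde B)=0$ and $\kappa(\widetilde B)=-\infty$. It then suffices to see that $q(\widetilde B)=0$, after which Castelnuovo's rationality criterion gives that $\widetilde B$, hence $B$, is rational. This is where the rationality of $A$ enters: $A$ rational gives $q(\widetilde A)=0$, and one combines this with the cohomology long exact sequence above, the equality $h^2(\O_B)=h^0(\omega_B)=h^0(\O_B(-C))=0$, and control of the singularities of $A$ and $B$ (so that $q(\widetilde A)=h^1(\O_A)$ and $q(\widetilde B)=h^1(\O_B)$, i.e. that $A$ and $B$ have rational singularities) to conclude both $p_a(C)=1$ and $q(\widetilde B)=0$.

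Finally, for the statement about $C$: on the minimal desingularization $\pi\colon\widetilde A\to A$ one has $\pi^{*}C=\widetilde C+E$ with $\widetilde C$ the strict transform and $E\ge 0$ exceptional, while $-K_{\widetilde A}=\pi^{*}(-K_A)+\Delta=\pi^{*}C+\Delta$ with $\Delta$ exceptional, carrying the discrepancies of $\pi$; one must show $E+\Delta\equiv 0$, i.e. that $C$ meets the singular locus of $A$ only in the mildest possible way and that those singularities are crepant for $C$, and symmetrically on $\widetilde B$. The main obstacle in the whole argument is exactly this singularity analysis: one has to prove that the components $A,B$ of a member of $|-K_X|$ on a Gorenstein canonical Fano threefold, together with their intersection curve $C$, are constrained enough — roughly, that $A$ and $B$ are normal with at worst Du Val singularities off $C$ and that $S_0$ is generically nodal along $C$ — for the discrepancy and exceptional corrections to cancel. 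Once this is in place, $\widetilde C\in|-K_{\widetilde A}|$ and symmetrically for $B$, and the computation $p_a(\widetilde C)=1$ is automatic.
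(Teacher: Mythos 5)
Your outline is a genuinely different strategy from the paper's (direct adjunction plus Mayer--Vietoris cohomology, versus the paper's one-line reduction to the Kulikov--Persson--Pinkham classification of semistable degenerations of K3 surfaces), but as written it does not constitute a proof: the step you yourself flag as ``the main obstacle'' is left entirely unresolved, and it is precisely the mathematical content of the statement. Concretely, three things are asserted but not established. First, the adjunction $\omega_A\cong\O_A(-C)$, $\omega_B\cong\O_B(-C)$: $A$ and $B$ are only Weil divisors on the Gorenstein $X$ (their sum is Cartier, they individually need not be), and they need not even be normal, so the formula $\omega_A\cong\omega_{S_0}\otimes\O(-B)|_A$ has to be interpreted and justified reflexively, with a conductor correction if $S_0$ is worse than nodal along $C$. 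Second, the passage from $h^1(\O_B)=0$ to $q(\widetilde B)=0$: for a resolution $\pi\colon\widetilde B\to B$ the Leray sequence gives $h^1(\O_{\widetilde B})=h^1(\O_B)+h^0(R^1\pi_*\O_{\widetilde B})-\dots$ only when $B$ is normal, and the correction term vanishes only when the singularities are rational; you assume both without proof, and if $B$ is non-normal the inequality you need can fail in either direction. Third, the cancellation $E+\Delta\equiv 0$ identifying the strict transform of $C$ with an anticanonical divisor on the minimal resolution requires knowing that $A$ and $B$ have Du Val singularities away from $C$ and that $S_0$ is generically nodal along $C$; none of this is derived from the hypotheses. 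So the chain of deductions ($p_a(C)=1$, $q(\widetilde B)=0$, $\widetilde C\in|-K_{\widetilde A}|$) is conditional on unproved structural facts about $A$, $B$ and $C$.

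For comparison, the paper obtains exactly this singularity control for free: it places $S_0$ as the central fibre of a one-parameter family whose general fibre is a smooth K3 hyperplane section, performs semistable reduction, and invokes the classification of Kulikov and Persson--Pinkham. A degeneration with a reducible central fibre containing the rational component $A$ must be of Type II, and for Type II degenerations the classification states outright that every component is rational or elliptic ruled and that the double curves are anticanonical on each component; with two components, one of them rational, the proposition follows. If you want to salvage your direct approach, you would need to supply the normality/rationality-of-singularities analysis for components of anticanonical members on canonical Gorenstein Fano threefolds, which is a nontrivial result in its own right; the degeneration argument is the device that makes this unnecessary.
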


\begin{proof} We can consider a family $f: \mathcal S\longrightarrow \mathbb D$, where $\mathbb D$ is a disc, the fibre of $f$ over $0$ is $S_0$ and the general fibre of $f$ is a general hyperplane section of $X$, that is a K3 surface $S$. Let $\phi: \mathcal X\longrightarrow \mathbb D$ be the semistable reduction of $f: \mathcal S\longrightarrow \mathbb D$. Clearly $\phi: \mathcal X\longrightarrow \mathbb D$ must be a type II degeneration (see \cite {K,PP}), whence the assertion immediately follows. \end{proof}

\subsection{Lines on Fano threefolds} \label{ssec:lines}

In this section we first prove the following result due to Fano (see \cite[\S2]{Fa}):

\begin{proposition}[Fano's Lemma]	\label{prop:lines} Let $X\subset \p^{p+1}$ be an anticanonically embedded Fano threefold and let $\mathcal R$ be any irreducible family of lines contained in $X$. Then either $\mathcal R$ has dimension 2, in which case the lines in $\mathcal R$ fill up a plane, or $\mathcal R$ has dimension at most 1.
\end{proposition}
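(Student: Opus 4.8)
The plan is to bound the dimension of $\mathcal R$ by intersecting the lines with a general hyperplane section $S \in |-K_X|$, which is a K3 surface with at worst Du Val singularities, and using the fact that a K3 surface contains only finitely many lines (more precisely, at most a $0$-dimensional, hence finite, family, since a line $\ell \subset S$ satisfies $\ell^2 = -2$ and is rigid on $S$). First I would set $d = \dim \mathcal R$ and consider the incidence variety of pairs $(\ell, x)$ with $\ell \in \mathcal R$ and $x \in \ell$; this has dimension $d+1$, and it maps to $X$ with image the union $Y$ of all lines in $\mathcal R$. If $d \geq 2$, then $Y$ has dimension at least $2$. If $\dim Y = 3$, then $Y = X$, so through a general point of $X$ — and hence through a general point of a general hyperplane section $S$ — there pass infinitely many lines of $\mathcal R$; but a general such line meets $S$ in a point only (it is not contained in the general $S$, since $S$ contains only finitely many lines), and a point of $S$ lies on only finitely many lines contained in $X$ unless those lines sweep out a surface through that point inside $X \cap (\text{tangent space})$ — this needs care, so I will instead argue directly on $S$ below. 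The cleaner route: if $\dim Y = 2$, let $Y'$ be the (reduced) surface swept out; its general hyperplane section $Y' \cap S$ is a curve on the K3 surface $S$ swept out by a $1$-dimensional family of lines of $\mathcal R|_S$. Since $S$ contains only finitely many lines, the only way a $1$-dimensional family of lines can sweep out a curve is if that curve is a line counted with multiplicity — impossible for an honest $1$-dimensional family — unless all these lines coincide, contradiction; hence $\dim Y \leq 1$ forces $d \leq 1$ unless the lines of $\mathcal R$ all pass through finitely many points.

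So the real content is the case $\dim Y = 2$ with $d = 2$: here the lines of $\mathcal R$ form a $2$-dimensional family all lying on a surface $Y' \subset X$ of dimension $2$. I would show $Y'$ is a plane. Take the general hyperplane section $S$: it meets $Y'$ in a curve $\Gamma = Y' \cap S$. Each line $\ell \in \mathcal R$ not contained in $S$ meets $S$ in one point of $\Gamma$; as $\ell$ varies in the $2$-dimensional family, the intersection point varies over a dense subset of $\Gamma$, but each point of $\Gamma$ (being a point of the K3 surface $S$) lies on only finitely many lines of $X$ — here I use that a K3 surface has finitely many lines, so a point of $S$ cannot lie on a positive-dimensional family of lines unless those lines fill a plane $\subset$ the embedded tangent space, which again would force infinitely many lines onto $S$. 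Thus a general point of $\Gamma$ lies on a positive-dimensional family of lines of $\mathcal R$, and these lines sweep out $Y'$; since they all pass through a fixed point $x$ and lie on the threefold $X$, they lie in the intersection of $X$ with the embedded tangent $3$-space $T_xX$ (using that $X$ has canonical, hence rational, Gorenstein singularities and is cut out by quadrics for $p \geq 5$, or treating the low-genus cases by hand). A positive-dimensional cone of lines through $x$ inside $X$ forces a plane through $x$ contained in $X$; letting $x$ vary, these planes sweep out $Y'$, and two distinct such planes meet in a line, so they all coincide — $Y'$ is a single plane, and $\mathcal R$ is the family of lines in it.

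The main obstacle I expect is the step ``a point $x \in X$ lying on a positive-dimensional family of lines of $X$ implies a plane through $x$ is contained in $X$.'' This is where projective geometry of the quadrics through $X$ enters: for $p \geq 5$, Proposition \ref{prop:EP} gives that the ideal of $X$ is generated by quadrics (and cubics), so the cone of lines through $x$ contained in $X$ is cut out in $T_xX \cong \p^3$ by the quadrics and cubics through $X$ vanishing to order $2$ at $x$; if this cone is positive-dimensional it contains a plane or $X$ contains a plane through $x$, and one must rule out the quadric-cone case by using that $X$ is a threefold not containing a quadric surface in a generic way — alternatively, a surface swept out by a $\geq 1$-dimensional family of lines through a common point and lying on an irreducible threefold with only canonical singularities must be linear, which can be seen by a tangent-space/degree argument. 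For $p = 3, 4$ (the quartic and the $(2,3)$ complete intersection) the statement is classical and handled separately. Closing up the family of planes — showing they are all the same plane — is then immediate since $Y'$ is irreducible of dimension $2$ and distinct planes meet in at most a line.
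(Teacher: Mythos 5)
Your proposal has a genuine gap, and it is exactly at the case that carries all the difficulty of the statement: a $2$-dimensional family $\mathcal R$ whose lines sweep out the whole of $X$ (your case $\dim Y=3$). Your claimed contradiction there rests on the assertion that ``through a general point of $X$ there pass infinitely many lines of $\mathcal R$.'' This is false: the incidence variety has dimension $d+1=3$ and dominates the $3$-fold $X$, so its general fibre is \emph{finite} --- through a general point of $X$ pass finitely many (possibly just one) lines of $\mathcal R$. That is perfectly compatible with the general hyperplane section $S$ being a K3 surface containing no lines at all (each line simply meets $S$ in one point), so the finiteness of lines on a K3 gives no contradiction whatsoever, and you never return to this case after deferring it. The paper's proof spends essentially all of its effort here, and needs two genuinely different arguments: when a \emph{unique} line passes through the general point, the induced map between two general hyperplane sections $S\dasharrow S'$ is a morphism of minimal K3 surfaces fixing the canonical curve $S\cap S'$ pointwise, hence comes from a perspectivity, forcing $X$ to be a cone (excluded by canonicity of the singularities); when \emph{several} lines pass through the general point, one shows that the tangent spaces to $X$ along a general line $\ell\in\mathcal R$ all lie in the at most $5$-dimensional span $\langle T_{X,P},T_{X,P'}\rangle$ (via the interpretation of tangent vectors to $\mathcal R\subset\mathbb G(1,r)$ as maps $\ell\to\p_\ell$), and a chain of such scrolls forces $X$ into a $\p^6$, i.e.\ $p\leq 5$, where the statement is checked directly. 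Neither of these ideas, nor any substitute for them, appears in your proposal.

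By contrast, the case you identify as ``the real content'' ($\dim Y=2$) is the easy one: an irreducible surface containing a $2$-dimensional family of lines is a plane, full stop (the paper dispatches this in one sentence), and your lengthy discussion of cones of lines through points of $\Gamma$, tangent $3$-spaces and quadrics through $X$ is not needed there. You also do not address $\dim\mathcal R\geq 3$; the paper excludes it by B.~Segre's classification (a $4$-dimensional family forces $X=\p^3$, a $3$-dimensional family forces a quadric or a scroll in planes). In summary: the easy case is over-engineered, the bound $\dim\mathcal R\leq 2$ is omitted, and the hard case ($\mathcal R$ of dimension $2$ sweeping out $X$) is not proved.
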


\begin{proof} First of all we notice that the dimension of $\mathcal R$ is at most 2. Indeed, the dimension of $\mathcal R$ can be at most 4 and if it is 4 then $X$ is $\p^3$  (see \cite {BS}), a contradiction. If the dimension of $\mathcal R$ is 3, then $X$ can either be a quadric or a scroll in planes (see again  \cite {BS}), and both cases are not possible. Let us now assume that the dimension of $\mathcal R$ is 2. If the closure of the union of the lines of $\mathcal R$ is a surface, then this surface, containing a two--dimensional family of lines, is a plane. So we may assume that the closure of the union of the lines of $\mathcal R$ is not a surface, so that it is the whole $X$. 

Suppose first that, if $P\in X$ is a general point, then there is a unique line of $\mathcal R$ passing through $P$. Let $S$ and $S'$ be two general hyperplane sections of $X$ intersecting along a smooth canonical curve $C$. There is an obvious birational map $\pi: S\dasharrow S'$, that maps a general point $P\in S$ to the point $P'\in S'$, such that the line $\langle P, P'\rangle$ belongs to $\mathcal R$. Since $S,S'$ are minimal K3 surfaces, $\pi$ is actually a morphism. Moreover $C$ is pointwise fixed by $\pi$. This implies that $\pi$ is in fact induced by a projectivity between the hyperplanes spanned by $S$ and $S'$, that by abuse of notation we still denote by $\pi:  \langle S\rangle \longrightarrow \langle S'\rangle$. In fact this projectivity is a perspective, since $\pi$ fixes $C$ and therefore it fixes the subpace $\langle C\rangle=\langle S\rangle \cap \langle S'\rangle$ spanned by $C$. This immediately implies that $X$ is a cone, a contradiction.

Suppose next that   if $P\in X$ is a general point, there is more than one line of $\mathcal R$ passing through $P$. Let us fix  a general line $\ell$ in $\mathcal R$, two general points $P,P'\in \ell$ and the two tangent spaces $T_{X,P},T_{X,P'}$ to $X$ at $P,P'$ respectively. Then the join $\Pi:=\langle T_{X,P}, T_{X,P'}\rangle$ has dimension at most 5. We assume $\dim(\Pi)=5$, otherwise the proof runs in a similar way  (and the details can be left to the reader). 

\begin{claim}\label{cl:3} The linear space $\Pi$ contains the tangent space $T_{X,Q}$ for all points $Q$ of $\ell$ that are smooth for $X$. \end{claim}

\begin{proof} [Proof of the Claim] The family $\mathcal R$ can be interpreted as a surface $\Sigma$ on the Grassmannian $\mathbb G (1,r)$ of lines in $\p^r$, and $\ell$ as a general point of $\Sigma$. Take any tangent vector $\bf t$ to $\Sigma$ at $\ell$. As well know $\bf t$ can be interpreted as a projective map
$$
\tau: \ell \longrightarrow \p_\ell
$$
where $\p_\ell$ is the projective space of dimension $r-2$ whose points are the planes passing through $\ell$. The projective space $\p_\ell$  contains the linear subspace $\p_{\ell,\Pi}$ of dimension 3 whose points are the planes passing through $\ell$ and contained in $\Pi$. Since $\Pi$ contains $T_{X,P},T_{X,P'}$, it is immediate that $\tau(P)$ and $\tau(P')$ both belong to $\p_{\ell,\Pi}$, and therefore $\tau$ is a projectivity
$$
\tau: \ell \longrightarrow\p_{\ell,\Pi}.
$$
This immediately implies the assertion.\end{proof}

Let now $R_\ell$ be the scroll described by all lines in $\mathcal R$ that intersect $\ell$. 
By Claim \ref {cl:3}, $R_\ell$ is contained in $\Pi$. We may assume $R_\ell$ is irreducible, otherwise we substitute $R_\ell$ with an irreducible component of it. Now we have two possibilities:\\ \begin{inparaenum}
\item [(i)] if $T$ is a general point of $R_\ell$, all lines of $\mathcal R$ passing through $T$ are contained in $R_\ell$;\\
\item [(ii)] we are not in case (i), and then as $T$ varies in $R_\ell$, the lines of $\mathcal R$ passing through $T$ fill up the whole of $\mathcal R$.
\end{inparaenum}

\begin{claim} Case (i) above cannot happen. 	\end{claim}

\begin{proof}[Proof of the Claim] Suppose we are in case (i). Then either $R_\ell$ is a plane or $R_\ell$ would be a scroll surface  such that through its general point there is more than one line passing, and this can happen only if $R_\ell$ is a quadric. But then $X$ would be swept out by a 1--dimensional family of planes or of quadrics, and therefore its general hyperplane section would contain a 1--dimensional family or rational curves, a contradiction. 
\end{proof}

So we are left with case (ii). In this case let $T$ be a general point of $R_\ell$ and $\ell'$ a line of $\mathcal R$ passing through $T$ and not contained in $R_\ell$. Set $\Pi'=\langle \Pi, \ell'\rangle$ that has dimension at most 6. Then $\Pi'$ contains the ruled surface $R_{\ell'}$, because any line of $\mathcal R$ intersecting $\ell'$ intersects also $R_\ell$, hence it intersects $\Pi$. But then, since $\ell'$ is also a general line of $\mathcal R$, all lines in $\mathcal R$ intersect also $R_{\ell'}$, hence they must lie in $\Pi'$, thus $X$ is contained in $\Pi'$. We conclude that $p+1\leq 6$, hence $p\leq 5$. On the other hand the Fano threefolds of genus $p\leq 5$ are well known and they do not contain
a two dimensional family of lines. \end{proof} 

We will also need the following:

\begin{proposition}\label{prop:isk} Let $X\subset \p^r$ be an anticanonically embedded Fano threefold and let $\ell$ be a line contained in the smooth locus of $X$. Then for the normal bundle $N_{\ell,X}$ of $\ell$ in $X$ there are only the following possibilities
\begin{equation}\label{eq:line}
\text{either} \quad N_{\ell,X}\cong \mathcal O_\ell\oplus \mathcal O_\ell(-1)\quad \text{or}\quad N_{\ell,X}\cong \mathcal O_\ell(-2)\oplus \mathcal O_\ell(1).
\end{equation}
\end{proposition}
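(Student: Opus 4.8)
The plan is to pin down the two integers in the splitting type of $N_{\ell,X}$ by combining a degree computation with the constraint imposed by $\ell$ lying in the smooth locus of $X$. Since $\ell\cong\p^1$ and $X$ is smooth along $\ell$, the normal bundle $N_{\ell,X}$ is a rank-two vector bundle on $\p^1$, hence by Grothendieck's splitting theorem $N_{\ell,X}\cong\mathcal O_\ell(a)\oplus\mathcal O_\ell(b)$ with $a\leq b$; it remains to determine $(a,b)$.

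First I would compute $a+b=\deg\det N_{\ell,X}$ via adjunction: $\omega_\ell\cong\omega_X|_\ell\otimes\det N_{\ell,X}$. Since $X$ is anticanonically embedded, $\omega_X\cong\mathcal O_X(-1)$, so $\omega_X|_\ell\cong\mathcal O_\ell(-1)$ because $\ell$ is a line; together with $\omega_\ell\cong\mathcal O_\ell(-2)$ this gives $\det N_{\ell,X}\cong\mathcal O_\ell(-1)$, that is $a+b=-1$. In particular $a\leq -1$ and $b\geq 0$.

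For the lower bound on $a$ I would use that, $X$ being smooth along $\ell$, the Zariski tangent spaces of $X$ inject into those of $\p^r$ along $\ell$, so $T_X|_\ell\hookrightarrow T_{\p^r}|_\ell$ as a subbundle containing $T_\ell$; passing to quotients by $T_\ell$ yields an inclusion of vector bundles $N_{\ell,X}\hookrightarrow N_{\ell,\p^r}\cong\mathcal O_\ell(1)^{\oplus(r-1)}$. Each of the direct summands $\mathcal O_\ell(a)$ and $\mathcal O_\ell(b)$ of $N_{\ell,X}$ is then a subsheaf of $\mathcal O_\ell(1)^{\oplus(r-1)}$, hence admits a nonzero homomorphism to some factor $\mathcal O_\ell(1)$; therefore $a\leq 1$ and $b\leq 1$. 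Combining $b\leq 1$ with $a+b=-1$ gives $a\geq -2$, so $(a,b)\in\{(-1,0),(-2,1)\}$, which is exactly \eqref{eq:line}.

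I do not anticipate a real obstacle here: the argument is entirely formal once the two normal-bundle sequences are in place. The only points requiring a little care are the legitimacy of the inclusion $N_{\ell,X}\hookrightarrow N_{\ell,\p^r}$ — which is precisely where the hypothesis that $\ell$ lies in the smooth locus of $X$ is used — and the elementary remark that any sub-line-bundle of $\mathcal O_\ell(1)^{\oplus(r-1)}$ has degree at most $1$.
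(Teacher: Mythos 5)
Your argument is correct and is exactly the standard one: the paper itself gives no independent proof but simply cites \cite[Lemma (3.2)]{Isk} (noting the argument carries over since $\ell$ lies in the smooth locus), and that lemma is proved by precisely your two steps --- adjunction giving $\deg N_{\ell,X}=-K_X\cdot\ell+2g(\ell)-2=-1$, and the subbundle inclusion $N_{\ell,X}\hookrightarrow N_{\ell,\p^r}\cong\mathcal O_\ell(1)^{\oplus(r-1)}$ bounding each summand's degree by $1$. So your proposal matches the intended proof; it just writes out the details the paper leaves to the reference.
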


\begin{proof} This has been proved in \cite [Lemma (3.2)]{Isk} under the hypothesis that $X$ is smooth. The proof runs exactly in the same way in our case. \end{proof}

If for a line $\ell$ contained in the smooth locus of $X$ the first of the two alternatives in \eqref {eq:line} occurs, one says that $\ell$ is an \emph{ordinary line} of $X$.

\section{Rationality of trigonal Fano threefolds} \label{sec:trig}

As already said, in the paper \cite {PCS}  there is a full classification of trigonal  Fano threefolds, and moreover in that paper the authors, going through the classification, also determine which of these Fano threefolds are rational or not.  In this section, following ideas of Fano in \cite {Fa}, we  will   prove the following result:

\begin{theorem}\label{thm:trig} Let $X\subset \p^{p+1}$ be a  trigonal Fano threefold of genus $p\geq 8$. Then $X$ is rational. 
\end{theorem}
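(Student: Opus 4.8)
The plan is to exploit the explicit geometric description of trigonal Fano threefolds from Proposition~\ref{prop:EP}, namely that such an $X$ sits inside a $4$-dimensional variety $V$ of minimal degree $p-2$ swept out by a $1$-dimensional rational family $\mathcal F$ of $3$-planes, and is cut out on $V$ by a cubic hypersurface containing $p-4$ members of $\mathcal F$. Equivalently (cf.\ the remark following Proposition~\ref{prop:EP}), $X$ carries a pencil $\mathcal P$ of cubic surfaces, each lying in a $3$-plane of $\mathcal F$, and the base locus of $\mathcal P$ is the union of the $p-4$ planes $\Lambda_1,\dots,\Lambda_{p-4}$ of $\mathcal F$ contained in $X$ together with a residual curve. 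First I would set up the rational map $\psi\colon X\map \p^1$ given by this pencil, whose general fibre is a cubic surface $F\subset \p^3$; a smooth cubic surface is rational, so $X$ is birational to a conic bundle (indeed a cubic-surface bundle) over $\p^1$, and rationality will follow once I produce enough sections, or rather enough rational multisections that cut the generic fibre in a rational zero-cycle of degree $1$ — concretely, a curve on $X$ meeting the general $F$ in a single point, or a family of lines on the fibres.

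The key point is that for $p$ large the base locus is rich: there are $p-4$ planes $\Lambda_i\subset X$, and each $\Lambda_i$ meets the general cubic surface $F$ of the pencil along a line (since $\Lambda_i$ lies in a $3$-plane of $\mathcal F$ and $F$ lies in another such $3$-plane, their span being at most $\p^7$, and on $V$ two members of $\mathcal F$ meet suitably). So each $\Lambda_i$ furnishes a line $\ell_i(t)$ on the fibre $F_t$, i.e.\ a $1$-parameter family of lines on the cubic-surface bundle. Blowing up one such plane $\Lambda_i$ — or rather, using the projection of $X$ from the linear span of several of the planes — I would reduce the generic fibre: projecting a cubic surface from a line on it exhibits it as (birational to) a conic bundle over $\p^1$, and then a second line (coming from $\Lambda_j$) gives a section of that conic bundle, forcing rationality of the generic fibre over the function field $\C(t)$. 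By the Graber--Harris--Starr / Tsen-type principle, or more elementarily because a conic bundle over $\p^1$ with a section is rational, one concludes $X$ is rational over $\C$. The hypothesis $p\geq 8$ is what guarantees $p-4\geq 4$ planes are available, leaving room to perform the two successive projections and still have a plane left to produce a rational section; I would track carefully how many planes are consumed.

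Concretely the steps are: (1) invoke Proposition~\ref{prop:EP}(ii)--(iii) to get $V$, $\mathcal F$, the cubic, and the planes $\Lambda_1,\dots,\Lambda_{p-4}\subset X$; (2) describe the pencil $\mathcal P$ of cubic surfaces and check that the general member $F$ is an irreducible (indeed, for general $X$, smooth) cubic surface, and that each $\Lambda_i$ cuts $F$ in a line $\ell_i$; (3) project $X$ from $\langle \Lambda_1,\Lambda_2\rangle$ (or blow up and contract appropriately) to realize a general fibre $F$, via the two lines $\ell_1,\ell_2$ on it, as a rational surface with a distinguished conic-bundle structure admitting a section; (4) conclude that the generic fibre of $\psi$ is $\C(t)$-rational, hence $X$ is rational, handling separately the finitely many degenerate or hyperelliptic borderline configurations (using Theorem~\ref{thm:lops} if the trigonal structure degenerates to a hyperelliptic one, though properly that should not occur here). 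The main obstacle I expect is step (3): controlling the geometry of the general member of the pencil $\mathcal P$ when $X$ is singular — a priori $F$ may be singular or even non-normal — and verifying that the lines $\ell_i$ are genuinely distinct and in sufficiently general position on $F$ to carry out both the conic-bundle reduction and the construction of a section; this is presumably where Fano's original argument was incomplete and where the "serious refinements" mentioned in the introduction are needed, so I would spend the bulk of the proof making the intersection-theoretic bookkeeping on $V$ and on the $F_t$ rigorous, possibly by first treating a general $X$ and then degenerating.
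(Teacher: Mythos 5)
Your strategy rests on a misreading of Proposition \ref{prop:EP}(iii), and the resulting gap is fatal. The $p-4$ distinguished members of the family $\mathcal F$ contained in the cubic hypersurface are $3$-dimensional linear spaces, not planes, and they are not contained in $X$: the cubic section of $V$ decomposes as $X+\Pi_1+\dots+\Pi_{p-4}$, so these spaces are the \emph{residual} components of that section, while $X$ itself need not contain any plane at all. Moreover, when $V$ is a smooth scroll the members of $\mathcal F$ are the pairwise disjoint fibres of $V\to\p^1$, so the family of cubic surfaces $F_t=X\cap\Pi_t$ has empty base locus, and two distinct members of $\mathcal F$ never meet in a line (when $V$ is a cone they meet only along the vertex). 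Consequently there are no planes $\Lambda_i\subset X$ cutting the general fibre in a line, and steps (2)--(4) of your plan --- extracting two $1$-parameter families of lines on the fibres, reducing to a conic bundle with a section, and concluding rationality of the generic fibre over $\C(t)$ --- have nothing to run on. This is not a repairable bookkeeping issue: the generic fibre is a cubic surface over $\C(t)$, and deciding its rationality over that non-closed field is precisely the hard content of the theorem; without a genuine source of lines or points defined over $\C(t)$ the argument never starts.

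For comparison, the paper's proof is a case analysis on the scroll $V$. If $V$ is a cone, one shows the cubic hypersurface must be singular at a point of the vertex, so every cubic surface of the family passes doubly through a fixed point and projection from that point yields rationality. If $V$ is smooth, the balanced splitting type forces $p=10$ and $X\cong\p^1\times S$ with $S$ a cubic surface; in the remaining splitting types one projects from a general member of $\mathcal F$, uses Proposition \ref{prop:reduc} to see that the general hyperplane section of the image threefold is rational, and invokes the result of \cite{CF} on threefolds whose general surface section has negative Kodaira dimension. Note that the cone case realizes, in corrected form, the idea you were reaching for: a common singular point on all the cubic surfaces is what actually makes the fibres rational over $\C(t)$, not lines coming from nonexistent base planes.
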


This theorem could be deduced from the results in \cite {PCS}. However the proof presented here is independent form the one in \cite {PCS}, is conceptually rather easy, very geometric,  and does not resort to the full classification of  \cite {PCS}. 

\begin{proof}[Proof of Theorem \ref {thm:trig}] Recall that $X$ is contained in a rational normal 4--dimensional scroll in 3--dimensional spaces $V$ of degree $p-2$. If we denote by $H$ the hyperplane class of $V$ and by $\Pi$ the class of a 3--dimensional space of the rational family $\mathcal F$, one has the linear equivalence of Weil divisors on $V$
$$
X\sim 3H-(p-4)\Pi.
$$

First we consider the case in which $V$ is a cone. Note that the vertex of the cone $V$ cannot have dimension $2$. In fact, if the vertex $P$ of $V$ is a plane, a cubic hypersurface cutting out $X$ on $V$, that has to contain $p-4$ spaces of the family $\mathcal F$, must contain $P$, and therefore the spaces of the family $\mathcal F$ would cut out on $X$, off $P$,  surfaces of degree at most 2, and this would imply that the general hyperplane section of $X$ is swept out by rational curves, a contradiction. Hence the vertex of $V$ is either a point or a line. 

Suppose first the vertex $O$ of $V$ is a point. By the same argument we made before, a cubic hypersurface $F$ cutting out $X$ on $V$ must contain $O$, because it has to contain $p-4$ spaces of the family $\mathcal F$. We will prove that $F$ must be singular at $O$. This immediately implies that $X$ is rational, because then the cubic surfaces cut out by the spaces in $\mathcal F$ on $X$ are singular at the fixed point $O$. Then, by projecting $X$ from $O$ to a hyperplane, we have a birational map of $X$ to the hyperplane section of $V$ that is rational. So we dispose of this case by proving the:

\begin{claim}\label{cl:uno} In the above setting $F$ is singular at $O$.
\end{claim}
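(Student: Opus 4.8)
The plan is to argue by contradiction: assuming $F$ is smooth at $O$, I will show that $F$ is reducible, which is absurd because $X$ is an irreducible component of $V\cap F$ of dimension $3$ and degree $2p-2$. To set up coordinates, observe that since the vertex of the cone $V$ is exactly the point $O$, the base $V'$ of $V$ is a threefold of minimal degree $p-2$ in $\p^{p}$ that is not itself a cone, hence (being of minimal degree and not a cone, with $p\geq 8$) a smooth rational normal scroll $V'=\p(\E)$, with $\E=\O(a_1)\oplus\O(a_2)\oplus\O(a_3)$, $a_i\geq 1$, $a_1+a_2+a_3=p-2$; the members of $\mathcal F$ are the cones $\Pi=\langle O,L'\rangle$ over the plane fibres $L'$ of $V'$, and $V\cap F=X+\Pi_1+\dots+\Pi_{p-4}$, with $\Pi_1,\dots,\Pi_{p-4}$ distinct members of $\mathcal F$ (Proposition \ref{prop:EP}), so $O\in\Pi_i$ for every $i$. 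Choose homogeneous coordinates $x_0,\dots,x_{p+1}$ with $O=[1:0:\dots:0]$ and $\langle V'\rangle=\{x_0=0\}$; as $O\in F$ we may write $F=x_0^2L+x_0Q+K$ with $L,Q,K$ forms in $x_1,\dots,x_{p+1}$ of degrees $1,2,3$, and inspecting the partials of $F$ at $O$ shows that $F$ is singular at $O$ if and only if $L=0$. So it suffices to prove $L=0$.

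The key input is that $\Pi_i\subset F$. Writing a point of $\Pi_i$ as $\lambda\,O+w$ with $w$ in the linear span of the fibre $L'_i\subset\{x_0=0\}$, the identity $0=F(\lambda\,O+w)=\lambda^2L(w)+\lambda Q(w)+K(w)$ yields $L|_{L'_i}=Q|_{L'_i}=K|_{L'_i}=0$ for every $i$. In particular $L$ vanishes on $\Sigma:=\langle L'_1,\dots,L'_{p-4}\rangle\subseteq\langle V'\rangle=\p^{p}$. Since the span of $k$ distinct plane fibres of $\p(\E)$ has dimension $\sum_{j=1}^3\min(k,a_j+1)-1$, taking $k=p-4$ (which is $\geq 2$ because $p\geq 8$) gives $\dim\Sigma=p$ — whence $\Sigma=\p^{p}$ and therefore $L=0$ — unless $\{a_1,a_2,a_3\}=\{p-4,1,1\}$, in which case $\dim\Sigma=p-1$, i.e. $\Sigma$ is a hyperplane of $\p^{p}$. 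Thus only the scroll $\E=\O(p-4)\oplus\O(1)\oplus\O(1)$ needs a further argument.

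For that scroll, suppose for contradiction $L\neq 0$. Then $\Sigma$ and $\{L=0\}\cap\{x_0=0\}$ are both hyperplanes of $\p^{p}$ with $\Sigma\subseteq\{L=0\}$, so they coincide; as $Q$ and $K$ also vanish on $\Sigma=\{L=0\}\cap\{x_0=0\}$ they are divisible by $L$, say $Q=L\ell$ and $K=L\beta$, hence $F=L\,(x_0^2+x_0\ell+\beta)=L\cdot G$ with $G$ a quadric. Then $X\subseteq V\cap F=(V\cap\{L=0\})\cup(V\cap\{G=0\})$, and being irreducible $X$ lies in one of the two pieces; but $V$ is non-degenerate and $V\cap F$ is a proper subscheme of $V$, so neither $\{L=0\}$ nor $\{G=0\}$ contains $V$, whence both pieces are pure of dimension $3$ with degrees $\deg V=p-2$ and $2\deg V=2p-4$ respectively. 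This forces $\deg X\leq 2p-4<2p-2$, a contradiction; so $L=0$ in this case too, and the Claim follows.

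The heart of the matter is the dimension estimate for $\Sigma$ — in particular the fact that, for $p\geq 8$, it leaves room for only the single scroll type $\O(p-4)\oplus\O(1)\oplus\O(1)$ — together with the disposal of that type by the reducibility argument; the remaining steps are routine. One should also verify the mild points that the $p-4$ members of $\mathcal F$ occurring in $V\cap F$ are genuinely distinct (so that $\Sigma$ has the asserted dimension) and that the final degree inequality uses only that $X$ is a three-dimensional component of $V\cap F$.
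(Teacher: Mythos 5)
Your reduction of the Claim to the vanishing of the linear part $L$ of $F$ at $O$, the observation that $\Pi_i\subset F$ forces $L$, $Q$, $K$ to vanish on each plane fibre $L_i'$, and the span computation $\dim\Sigma=\sum_{j}\min(p-4,a_j+1)-1$ are all correct, and they do settle every scroll type except $\E=\O_{\p^1}(p-4)\oplus\O_{\p^1}(1)\oplus\O_{\p^1}(1)$. The gap is in your treatment of that remaining case. You assert that ``$Q$ and $K$ also vanish on $\Sigma$'', but what you have actually established is only that they vanish on the \emph{union} $L_1'\cup\dots\cup L_{p-4}'$. For a linear form, vanishing on a set implies vanishing on its linear span, which is why your argument for $L$ is sound; for forms of degree $2$ and $3$ it does not (for instance $x_1x_2$ vanishes on two hyperplanes of $\{x_0=0\}$ whose union spans it). Consequently the divisibilities $L\mid Q$ and $L\mid K$, hence the factorization $F=L\cdot G$ and the closing degree contradiction, are not established, and the single scroll type that your span computation isolates --- which certainly occurs --- is left unproved. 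This is precisely the crux of the statement, so the proposal is incomplete.

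The missing case can be closed by the mechanism the paper uses, which grafts naturally onto your setup. Your (correct) first computation shows that the tangent hyperplane $T=\{L=0\}$ of $F$ at $O$ contains $O$ and all the $\Pi_i$, so $T$ cuts on $V$, residually to $\Pi_1,\dots,\Pi_{p-4}$, a threefold $R$ of degree $2$ passing through $O$ and meeting the general space of $\mathcal F$ in a plane. If $R$ is irreducible it is a rank-$4$ quadric in a $\p^4$ with vertex $O$, carrying two rulings of planes, and the divisor cut on it by the cubic $F$ has class $3P_1+3P_2$; since $F$ would have to contain the $p-4\geq 4$ planes $\Pi_i\cap R$, all in one ruling, and $3P_1+3P_2-4P_1$ is not effective, $F$ would contain $R$ outright, making $R$ a degree-$2$ component of $X$ --- absurd. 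The reducible case ($R$ a pair of $3$-spaces, one of which meets the spaces of $\mathcal F$ in planes) is handled the same way. Adding this intersection-theoretic step for the scroll $\O_{\p^1}(p-4)\oplus\O_{\p^1}(1)\oplus\O_{\p^1}(1)$ (or simply invoking the paper's residual-quadric argument there) would complete your proof; note that this argument in fact covers all scroll types at once, so the case division you perform, while correct, is not needed once the residual quadric is brought in.
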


\begin{proof}[Proof of the Claim] Suppose, by contradiction, that $F$ is smooth at $O$. We may suppose that $F$ contains $p-4$ distinct spaces $\Pi_i$, $1\leq i\leq p-4$, of the family $\mathcal F$. Then $\Pi_1,\ldots, \Pi_{p-4}$ have to be contained in the tangent hyperplane to $F$ at $O$. This hyperplane cuts out on $V$, off $\Pi_1,\ldots, \Pi_{p-4}$,  a residual 3--dimensional variety $Q$ of degree 2. If $Q$ is irreducible, then it is a quadric that intersects the linear spaces of $\mathcal F$ in planes of a ruling, i.e.,  $Q$ is a quadric of rank 4 in $\p^4$, with vertex $O$. So $Q$ is cut out by the cubic $F$ along $p-4\geq 4$ planes of a ruling, which implies that $F$ has to contain $Q$, a contradiction, because this would imply that $Q$ is a component of $X$. If $Q$ is reducible, then it splits in two 3--dimensional spaces, one of which $\mathfrak P$ intersects the linear spaces of $\mathcal F$ in planes. Then the same argument as above proves that $F$ should contain $\mathfrak P$, a contradiction. \end{proof}

Next, we assume that the vertex of $V$ is a line $\ell$. Again a cubic hypersurface $F$ cutting out $X$ on $V$ must contain $\ell$. As in the previous case the rationality of $X$ is a consequence of the following:

\begin{claim} The cubic form $F$ is singular in some point on $\ell$.
\end{claim}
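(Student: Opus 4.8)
The approach is by contradiction: assume $F$ is smooth at every point of $\ell$ and exploit that $F$ contains $p-4$ of the $3$-spaces of $\mathcal F$. Recall first the shape of $V$: since its vertex is the line $\ell$ and, by the argument just used for the point‑vertex case, cannot be a plane, $V$ is the cone with vertex $\ell$ over a rational normal scroll surface $W=S(a,b)$, $1\le a\le b$, $a+b=p-2$, lying in a complementary $\p^{p-1}$; the members of $\mathcal F$ are the $3$-spaces $\Pi_t=\langle\ell,r_t\rangle$, where $r_t$ runs over the rulings of $W$. As in the point case we may assume $F$ contains $p-4$ \emph{distinct} spaces $\Pi_{t_1},\dots,\Pi_{t_{p-4}}$.

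Choose coordinates $[x_0:x_1]$ on $\ell$ and transverse coordinates $z_j$, and write $F=\sum_j Q_j(x_0,x_1)z_j+(\text{terms of degree}\ge 2\text{ in the }z_j)$ with the $Q_j$ binary quadratics. Then $F$ is singular at $[a_0:a_1]\in\ell$ exactly when all the $Q_j$ vanish there, while at a smooth point $x\in\ell$ the tangent hyperplane is $T_x=\{\sum_j Q_j(a_0,a_1)z_j=0\}$, which contains $\ell$. Since each $\Pi_{t_i}\subseteq F$ passes through every point of $\ell$ and $F$ is smooth along $\ell$, we get $\Pi_{t_i}\subseteq T_x$ for all $x\in\ell$, hence $\Pi_{t_i}\subseteq\Lambda:=\bigcap_{x\in\ell}T_x$. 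Because the $Q_j$ are quadratic in $(x_0,x_1)$, the hyperplanes $T_x$ span at most a $\p^2$ in the $\p^{p-1}$ of hyperplanes through $\ell$, so $\Lambda$ has codimension $c\le 3$ in $\p^{p+1}$; and $c\ge 2$, since $c=1$ would force all $Q_j$ proportional to one binary quadratic, which vanishes somewhere on $\ell$, against the assumption. Writing $\Lambda=\langle\ell,\Lambda'\rangle$ with $\Lambda'\subseteq\p^{p-1}$ of codimension $c$, the inclusion $\Pi_{t_i}\subseteq\Lambda$ gives $r_{t_i}\subseteq\Lambda'$, so the $p-4$ distinct rulings $r_{t_i}$ of $S(a,b)$ all lie in a linear space of dimension $p-1-c\le p-3$. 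On the other hand $m$ distinct rulings of $S(a,b)$ span a linear space of dimension $\min(m-1,a)+\min(m-1,b)+1$; with $m=p-4$ and $p\ge 8$ (so $p-5\ge a$) this equals $a+\min(p-5,b)+1$, which is $p-1$ if $a\ge 3$, $p-2$ if $a=2$, and $p-3$ if $a=1$. This contradicts the bound $\le p-3\ge p-1-c$ unless $a=1$ and $c=2$ (and $c=3$ with $a=1$ is also excluded, as $p-3>p-4$).

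It remains to treat $W=S(1,p-3)$ with $c=2$. Here $W$ contains a directrix line $\Gamma$, and $\Pi_\infty:=\langle\ell,\Gamma\rangle$ is a $3$-space contained in $V$ (the cone over $\Gamma$ with vertex $\ell$), not a member of $\mathcal F$, meeting each $\Pi_t$ along the plane $\langle\ell,\Gamma(t)\rangle$. If $F\not\supseteq\Pi_\infty$, then $F\cap\Pi_\infty$ is a cubic surface in $\Pi_\infty\cong\p^3$ containing the $p-4\ge 4$ distinct planes $\Pi_{t_i}\cap\Pi_\infty$, which is impossible. Hence $F\supseteq\Pi_\infty$, so $\Pi_\infty$ is a component of the effective divisor $F|_V\in|3H|$ on $V$. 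But $F|_V=X+\Pi_{t_1}+\dots+\Pi_{t_{p-4}}$ with $X$ irreducible of degree $2p-2>1$ (hence not a linear space and not containing the $3$-space $\Pi_\infty$), and $\Pi_\infty$ is none of the $\Pi_{t_i}$; so $\Pi_\infty$ cannot occur as a component — a contradiction. This exhausts all cases and proves the claim.

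I expect the case $W=S(1,p-3)$ to be the main obstacle: the uniform ``intersection of all tangent hyperplanes'' estimate is exactly sharp there and does not by itself conclude, so one must bring in the extra rational $3$-space $\Pi_\infty$ spanned by $\ell$ and the directrix line, after which the argument hinges on the divisor‑class bookkeeping on the \emph{singular} cone $V$ (the identity $F|_V=X+\sum_i\Pi_{t_i}$ together with irreducibility of $X$). Making this rigorous on a cone — or, alternatively, showing outright that such a configuration does not arise for trigonal Fano threefolds of genus $p\ge 8$ — is the delicate point.
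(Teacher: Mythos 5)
Your proof is correct, and it follows the same overall skeleton as the paper's: argue by contradiction, observe that smoothness of $F$ along $\ell$ forces the $p-4$ rulings into the intersection $\Lambda$ of the tangent hyperplanes along $\ell$, show $\Lambda$ is too small, and dispose of the residual degenerate case by the ``a cubic surface in $\p^3$ cannot contain four planes'' trick. The middle step, however, is executed differently. The paper only extracts from the tangent-hyperplane computation that $\operatorname{codim}\Lambda\geq 2$, takes a pencil of hyperplanes through the span of the $\Pi_i$, and studies the residual degree-$2$ threefolds this pencil cuts on $V$: if irreducible they are rank-$3$ quadrics with vertex $\ell$, giving a one-dimensional family of conics on the degree-$(p-2)$ scroll surface obtained by projecting from $\ell$, which forces $p-2\leq 4$; if reducible one gets a $3$-space of $V$ meeting the rulings in planes, handled as in Claim \ref{cl:uno}. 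You instead pin down $\operatorname{codim}\Lambda\in\{2,3\}$ via the explicit binary quadratics $Q_j$ and compare with the exact dimension $\min(m-1,a)+\min(m-1,b)+1$ of the span of $m=p-4$ rulings of $S(a,b)$, which kills every case except $a=1$; that surviving case is precisely the paper's reducible case (your $\Pi_\infty=\langle\ell,\Gamma\rangle$ is the paper's $\mathfrak P$). Your version is more self-contained --- it makes explicit the ``direct computation left to the reader'' and avoids invoking the bound on surfaces carrying a one-dimensional family of conics --- at the cost of the case analysis on $(a,b)$. Finally, the worry you raise at the end is unfounded: no delicate divisor bookkeeping on the singular cone is needed, since the argument is purely set-theoretic --- $\Pi_\infty$ is irreducible and contained in $F\cap V$, whose irreducible components are $X$ and the $\Pi_{t_i}$, and it equals none of them; this is exactly the mechanism of the paper's Claim \ref{cl:uno}.
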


\begin{proof}[Proof of the Claim] Suppose, by contradiction, that $F$ is smooth all along $\ell$. Then a direct computation, that can be left to the reader, shows that the tangent hyperplane to $F$ at a point $P\in \ell$ varies with $P$.  Again we may suppose that $F$ contains $p-4$ distinct spaces $\Pi_i$, $1\leq i\leq p-4$, of the family $\mathcal F$. Since $\Pi_1,\ldots, \Pi_{p-4}$ have to be contained in the tangent hyperplane $\Pi$ to $F$ at every point $P\in \ell$, then the span of $\Pi_1,\ldots, \Pi_{p-4}$ has dimension not larger than $p-1$. Hence we can find a pencil of hyperplanes of $\p^{p+1}$ containing $\Pi_1,\ldots, \Pi_{p-4}$. The hyperplanes of this pencil cut out on $V$, off $\Pi_1,\ldots, \Pi_{p-4}$,  a 1--dimensional family $\mathcal Q$ of residual 3--dimensional varieties  of degree 2. 

Suppose  the general variety $Q\in \mathcal Q$ is irreducible, so that it is a rank 3 quadric with vertex the line $\ell$. By projecting down from $\ell$, $V$ maps to a rational normal scroll surface $\Sigma\subset \p^{p-1}$ of degree $p-2$, that has a 1--dimensional family of irreducible conics, and this implies right away that $p-2=\deg(\Sigma)\leq 4$, a contradiction, because we are assuming $p\geq 8$. 

If the general variety $Q\in \mathcal Q$ is reducible, then $V$ has to contain a 3--dimensional linear space $\mathfrak P$ intersecting the linear spaces of $\mathcal F$ in planes. Then the same argument as in the proof of Claim \ref {cl:uno} shows that $p-4\leq 3$, which leads again to a contradiction. \end{proof}

So we can assume now that $V$ is smooth, hence 
$$V=\p(\O_{\p^1}(d_1)\oplus \O_{\p^1}(d_2)\oplus \O_{\p^1}(d_3)\oplus \O_{\p^1}(d_4))$$
with 
$$d_1\geq d_2\geq	 d_3\geq d_4>0$$
and 
$$ d_1+d_2+d_3+d_4=p-2.$$
We examine separately the three different cases:\\ \begin{inparaenum}
\item [(i)] $d_1=d_2=d_3=d_4=d$;\\
\item [(ii)] $d_1>d_2=d_3=d_4=d$;\\
\item [(iii)] all other cases, i.e.,  $d_1\geq d_2>d_3\geq d_4$.
\end{inparaenum}

\begin{claim} \label{cl:2} In case (i) the only possibility is $d=2$, hence $p=10$ and the resulting Fano threefolds are rational.
	\end{claim}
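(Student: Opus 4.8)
The plan is to render the ambient scroll completely explicit. In case (i) the defining bundle is $\O_{\p^1}(d)^{\oplus 4}\cong\O_{\p^1}^{\oplus 4}\otimes\O_{\p^1}(d)$, which defines the same projective bundle as the trivial one, so $V\cong\p^1\times\p^3$. Under this identification the $3$--space class $\Pi$ of the family $\mathcal F$ is the fibre $\O(1,0)$ of the first projection, while the hyperplane class $H$ of $V$ restricts to $\O_{\p^3}(1)$ on each such fibre and has $H^4=\deg(V)=p-2=4d$; writing $H=\O(a,1)$ and computing $H^4=4a$ gives $a=d$, so $H=\O(d,1)$.

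The next step is to substitute this into the relation $X\sim 3H-(p-4)\Pi$ on $V$ established at the beginning of this proof. Since $p-2=4d$, that is $p=4d+2$, one obtains, as divisor classes on $\p^1\times\p^3$,
$$
X\sim 3\,\O(d,1)-(p-4)\,\O(1,0)=\O\big(3d-(p-4),\,3\big)=\O(2-d,\,3).
$$
Now $X$ is a nonempty effective divisor on $\p^1\times\p^3$, so its bidegree is componentwise nonnegative; hence $2-d\geq0$, i.e. $d\leq2$. On the other hand $p=4d+2\geq8$ forces $d\geq2$. Therefore $d=2$ and $p=10$, which is the first assertion.

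It remains to prove rationality when $d=2$. Then $X$ has class $\O(0,3)$, so it is the preimage under the second projection of a cubic surface $S\subset\p^3$; in other words $X\cong\p^1\times S$. Since $X$, being a Fano threefold in our sense, is normal with at worst canonical singularities, $S$ is a normal cubic surface with at worst Du Val singularities, hence a (possibly weak) del Pezzo surface of degree $3$, which is rational; consequently $X\cong\p^1\times S$ is rational. I expect the only non--bookkeeping point to be this last one, where one invokes the classical rationality of a normal cubic surface with Du Val singularities (equivalently, the cone over a smooth plane cubic --- the sole irrational irreducible cubic surface --- is excluded because its vertex is a simple elliptic, hence non--canonical, singularity); the subtlest part of the computation itself is fixing the normalizations $H=\O(d,1)$, $\Pi=\O(1,0)$ and the sign in the bidegree of $X$.
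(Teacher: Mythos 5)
Your argument is correct and is essentially the paper's: the effectivity condition $2-d\geq 0$ on the bidegree of $X\sim 3H-(p-4)\Pi$ is exactly the paper's degree count $p-4\leq 3d$, obtained there by restricting the residual cubic to the ruling curves $\p^1\times\{x\}$, and both arguments conclude with $p=10$, $d=2$ and $X\cong \p^1\times S$ for a cubic surface $S$. Your closing observation that the canonicity of the singularities of $X$ rules out $S$ being the cone over a smooth plane cubic is a careful justification of what the paper dismisses as ``immediate''.
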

	
	\begin{proof}[Proof of the Claim] In this case $V$ is nothing else than $\p^1\times \p^3$ embedded via the linear system of divisors of bidegree $(d,1)$. Hence $V$ contains a 3--dimensional family $\mathcal C$ of rational normal curves of degree $d=\frac {p-2}4$ parametrized by $\p^3$, which are the images of the lines $\p^1\times \{x\}$, with $x\in \p^3$. 

Let $F$ be a cubic hypersurface that cuts out $X$ on $V$ off $p-4$ distinct spaces $\Pi_i$, $1\leq i\leq p-4$, of the family $\mathcal F$. Since $F$ cannot contain all the curves of $\mathcal C$, one must have
$$
p-4\leq 3d=3\frac {p-2}4
$$
which implies $p\leq 10$. But since $p-2\equiv 0$ modulo 4, the only possibility is $p=10$ and $d=2$. In this case it is immediate that $X$ is isomorphic to $\p^1\times S$, where $S$ is the cubic surface cut out by $F$ on a 3--dimensional space of the family $\mathcal F$, proving the assertion.
\end{proof}

\begin{claim} In case (ii) there are no Fano threefolds with $p\geq 8$.
	\end{claim}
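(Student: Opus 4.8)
The plan is to run the argument of Claim~\ref{cl:2} essentially verbatim, now using the sub-scroll of $V$ carved out by the three equal summands $\O_{\p^1}(d)$.

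First I would isolate the relevant sub-scroll. Since $d_2=d_3=d_4=d$, the quotient $\O_{\p^1}(d_1)\oplus\O_{\p^1}(d)^{\oplus 3}\to\O_{\p^1}(d)^{\oplus 3}$ embeds $W:=\p(\O_{\p^1}(d)^{\oplus 3})$ as a $3$-dimensional sub-scroll of $V$, compatibly with the tautological bundles. As $\O_{\p^1}(d)^{\oplus 3}\cong\O_{\p^1}^{\oplus 3}\otimes\O_{\p^1}(d)$, the scroll $W$ is just $\p^1\times\p^2$, embedded by the restriction of $|H|$ as a scroll in planes of degree $3d$ spanning a $\p^{3d+2}$. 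Hence $V$ contains a $2$-dimensional family $\mathcal C$ of rational normal curves of degree $d$, parametrized by $\p^2$, namely the images of the lines $\p^1\times\{x\}$, $x\in\p^2$; each $C\in\mathcal C$ is a section of the scroll $V\to\p^1$, so $H\cdot C=d$ and $\Pi\cdot C=1$.

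Then I would intersect with $X$. As in Proposition~\ref{prop:EP}, let $F$ be a cubic cutting out $X$ on $V$ off $p-4$ distinct spaces $\Pi_1,\dots,\Pi_{p-4}$ of $\mathcal F$, so that $F$ cuts out on $V$ the divisor $X+\sum_{i=1}^{p-4}\Pi_i$. The cubic $F$ cannot contain every curve of $\mathcal C$: if it did, then $F\supseteq W$, so the irreducible threefold $W$ would be a component of that divisor; since $W$ is non-degenerate in a $\p^{3d+2}$ it is none of the $\Pi_i$, hence it would be a component of the irreducible $X$, forcing $X=W$ and the absurdity $2p-2=\deg X=\deg W=3d$. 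Therefore a general $C\in\mathcal C$ is not contained in $X$, and $0\leq X\cdot C=(3H-(p-4)\Pi)\cdot C=3d-(p-4)$, i.e. $p\leq 3d+4$. Substituting $p=d_1+3d+2$ this becomes $d_1\leq 2$; combined with $d_1>d\geq 1$ it leaves only $d=1$, $d_1=2$, hence $p=7$. So case (ii) yields no Fano threefold with $p\geq 8$.

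I do not expect a genuine obstacle here: the proof is word-for-word parallel to Claim~\ref{cl:2}, the only minor points being the elementary bundle computation identifying $W$ and the degree of the curves in $\mathcal C$, and the standard reductions (that a general member of $\mathcal C$ is not contained in $F$, and that the $p-4$ spaces may be taken distinct), both handled exactly as in the preceding claims since $V$ is now smooth and the intersection theory on $V$ is elementary.
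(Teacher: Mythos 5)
Your proof is correct and takes essentially the same route as the paper: the paper's proof simply says ``as in the proof of the previous claim, $p-4\leq 3d\leq 3\tfrac{p-3}{4}$, hence $p\leq 7$,'' and your argument is exactly the fleshed-out version of that, making explicit the sub-scroll $W=\p(\O_{\p^1}(d)^{\oplus 3})\cong\p^1\times\p^2$ and its $2$-dimensional family of degree-$d$ sections, the reason a general such section is not contained in $X$, and the resulting inequality $p-4\leq 3d$ which, combined with $d_1>d$, forces $p\leq 7$.
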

	
\begin{proof}[Proof of the Claim] As in the proof of Claim \ref {cl:2}, we must have
$$
p-4\leq 3d\leq 3\frac {p-3}4
$$
i.e., $p\leq 7$, a contradiction.\end{proof}

To examine case (iii), and finish the proof of the theorem, we do the following. Let $\Pi$ be a general 3--dimensional linear space of the family $\mathcal F$ of $V$. Let us project down $X$ and $V$ from $\Pi$ to a $\p^{p-3}$. In this projection $V$ maps birationally to $V'$ and $X$ maps to $X'$. The variety $V'$ is the rational normal scroll of degree $p-6$ that is the image of  
$$\p(\O_{\p^1}(d_1-1)\oplus \O_{\p^1}(d_2-1)\oplus \O_{\p^1}(d_3-1)\oplus \O_{\p^1}(d_4-1))$$
in $\p^{p-3}$ via its $\O(1)$ line bundle. 

We have:\\ \begin{inparaenum}
\item [(a)]  $V'$ is smooth as soon as $d_4\geq 2$ and in this case the projection of $V$ to $V'$ induces an isomorphism $V\longrightarrow V'$ that restricts to an isomorphism $X\longrightarrow X'$;\\
\item [(b)] $V'$ is a cone with vertex a single point $O$ if $d_3>d_4=1$;\\
\item [(c)] $V'$  is a cone with vertex a line $\ell$,  if $d_3=d_4=1$.
\end{inparaenum}

 In case (b) the projection of $V$ to $V'$ induces a morphism $V\longrightarrow V'$, that contracts a line $r$  to  $O$ and it is an isomorphism between $V\setminus r$ and $V'\setminus \{O\}$. In   case  (c) the projection of $V$ to $V'$ induces a morphism $V\longrightarrow V'$, that contracts a smooth quadric $Q$  to  $\ell$ and it is an isomorphism between $V\setminus Q$ and $V'\setminus \ell$. 
 
 In either case  the projection of $V$ to $V'$ induces a birational morphism $X\longrightarrow X'$. This implies that, if $A$ is a general cubic surface in the pencil $\mathcal P$ cut out on $X$ by the  linear subspaces of the family $\mathcal F$, then the general hyperplane section of $X$ containing $A$ is of the form $A+B$, with $B$ also irreducible. By Proposition \ref {prop:reduc}, $B$ is also rational, and it is birationally mapped to the general hyperplane section of $X'$, whose general hyperplane section is thus rational. Then by the results in \cite {CF}, $X'\subset \p^{p-3}$, with $p-3\geq 5$, is rational, hence $X$ is rational, finishing our proof. 
\end{proof}

\begin{remark}\label{rem:rat} Theorem \ref {thm:trig} is sharp. Indeed, in \cite {PCS} there are examples of non--rational Fano threefolds of genus $p\leq 7$ that are not cut out by quadrics. 
\end{remark} 

\begin{remark}\label{rem:rat1}It is possible that the same ideas as in the proof of Theorem \ref {thm:trig} can be used to prove Theorem \ref {thm:lops}, but we do not dwell on this here.
\end{remark}

\section{Fano threefolds containing a plane}\label{sec:plane}

In this section we will consider the rationality question for Fano threefolds containing a plane. In view of Theorem \ref {thm:trig} we can focus on the case in which the Fano threefold is cut out by quadrics. First we prove the following:

\begin{lemma}\label{lem:propl} Let $X\subset \p^{p+1}$, with $p\geq 5$, be an anticanonically embedded Fano threefold of genus $p$ that is non--trigonal and does not present the plane quintic case and contains a plane $\Pi$. Then the projection from $\Pi$ induces a birational map $\pi: X\dasharrow X'\subset \p^{p-2}$.
\end{lemma}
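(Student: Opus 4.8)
The plan is to show that the linear projection $\pi\colon X\dashrightarrow \p^{p-2}$ from the plane $\Pi$ is birational onto its image by proving that a general fibre of $\pi$ is a single point. A fibre of the projection through a general point $Q\in X$ is the residual scheme of $X\cap \langle \Pi, Q\rangle$ inside the $3$-plane $\Lambda:=\langle \Pi, Q\rangle$; since $\Pi\subset X$ and $\deg(X)=2p-2$, the intersection $X\cap \Lambda$ is a curve (for general $Q$) containing $\Pi\cap\Lambda$-type components only if $\Lambda$ meets $X$ badly, so for general $Q$ it is $\Pi$ (which lies in $\Lambda$ as a hyperplane of $\Lambda$) together with a residual curve $\Gamma_Q$; we must show $\Gamma_Q$ contains $Q$ as its only moving point, i.e.\ $\Gamma_Q$ is a line or, more precisely, that the part of $\Gamma_Q$ away from $\Pi$ is a single point. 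The essential input is Proposition \ref{prop:EP}: since $X$ is non-trigonal, does not present the plane quintic case, and $p\ge 5$, the ideal of $X$ is generated by quadrics. So $X$ is the intersection of the quadrics through it, and it suffices to understand the quadrics in the ideal that contain $\Pi$.

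The key step is the following linear-algebra count. Let $W\subset H^0(\p^{p+1},\mathcal I_X(2))$ be the space of quadrics through $X$, and let $W_\Pi\subset W$ be the subspace of those that in addition contain $\Pi$. Restricting a quadric $F\in W_\Pi$ to a general $3$-plane $\Lambda\supset\Pi$, the restriction vanishes on $\Pi$, hence factors as $\ell_\Pi\cdot \ell_F$ where $\ell_\Pi$ is the linear form cutting $\Pi$ out in $\Lambda$ and $\ell_F$ is a linear form on $\Lambda$; the residual curve $\Gamma_Q$ is then contained in the linear subspace of $\Lambda$ cut out by all the $\ell_F$, $F\in W_\Pi$. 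I would argue that $X$ being cut out by quadrics forces $W_\Pi$ to be large enough that these residual linear forms $\ell_F$ already cut $\Gamma_Q$ down to a single point: concretely, one shows $\dim W_\Pi\ge$ (something like) $p-1$ — using that the quadrics through $X$ cut out $X$ and that $\Pi$ is a plane, not a hyperplane section, so imposing "contains $\Pi$" drops the dimension by a controlled amount (codimension $\le 3$, the number of conditions for a quadric vanishing on $X$ to vanish on the extra plane $\Pi$ is bounded because the conormal data is constrained). Equivalently: if the residual linear forms $\ell_F$ spanned only a proper subspace of $\Lambda^\vee$ of codimension $\ge 2$, the residual curve $\Gamma_Q$ would be a line or conic varying in a $\ge 1$-dimensional family sweeping out $X$, forcing the general hyperplane section (a K3) to be swept by rational curves — a contradiction, exactly as in the proofs in \S\ref{sec:trig}. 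Hence the general fibre of $\pi$ is one point and $\pi$ is birational onto a non-degenerate threefold $X'\subset\p^{p-2}$ (non-degeneracy: the image spans $\p^{p-2}$ since projection of a non-degenerate variety from a linear subspace contained in it is non-degenerate in the quotient space).

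I expect the main obstacle to be making the dimension count for $W_\Pi$ rigorous and uniform in $p$, together with ruling out the degenerate possibility that $\Gamma_Q$ always breaks off a line through a fixed point of $\Pi$ (which would make $\pi$ finite of degree $>1$ rather than birational, or would reflect $X$ being a cone / swept by a pencil of surfaces of low degree). The clean way around this is the rational-curves-on-K3 obstruction: any positive-dimensional family of rational curves of bounded degree covering $X$ meets a general hyperplane section $S$ in a positive-dimensional family of rational curves, impossible on a K3 surface. Combining this with the Enriques–Petri structure (Proposition \ref{prop:EP}) that the quadrics through $X$ cut out $X$ exactly — so the residual scheme to $\Pi$ in $X\cap\Lambda$ is genuinely governed by $W_\Pi$ — should close the argument; the exclusion of the trigonal and plane-quintic cases is precisely what guarantees "cut out by quadrics" and hence that this residual analysis sees all of $X$.
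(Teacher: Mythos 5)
Your opening step coincides with the paper's: since $X$ is non--trigonal, does not present the plane quintic case and $p\geq 5$, Proposition \ref{prop:EP} gives that the ideal of $X$ is generated by quadrics, and every such quadric restricted to $\Lambda=\langle \Pi,Q\rangle$ splits off the hyperplane $\Pi$ of $\Lambda$; hence the residual intersection is automatically a \emph{linear} subspace $L\subseteq\Lambda$ through $Q$. This makes most of what you flag as "the main obstacle" moot: no dimension count for $W_\Pi$ is needed, $\Gamma_Q$ cannot be a conic, and the only thing left to do is to exclude $\dim L=2$ and $\dim L=1$.

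The genuine gap is in your exclusion of $\dim L=1$. The obstruction you propose --- "any positive-dimensional family of rational curves of bounded degree covering $X$ meets a general hyperplane section $S$ in a positive-dimensional family of rational curves" --- is false: a covering family of \emph{curves} meets a general hyperplane section in \emph{points}, and the members of the family actually contained in a general hyperplane form a subfamily of codimension $2$, hence from a two-dimensional family of lines covering $X$ you get only finitely many lines on the K3 section, which is no contradiction at all. The sweeping argument from \S\ref{sec:trig} works for one-dimensional families of \emph{surfaces} covering $X$ (these do cut a moving rational curve on $S$), which is exactly why it disposes of the case $\dim L=2$ ($X$ a scroll in planes), but it says nothing in the case $\dim L=1$, where one gets a two-dimensional family of lines covering $X$. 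The tool that kills this case is Fano's Lemma (Proposition \ref{prop:lines}): an irreducible two-dimensional family of lines on an anticanonically embedded Fano threefold fills up only a plane, so it cannot cover $X$. This is precisely what the paper invokes, and it is a substantive input whose proof occupies most of \S\ref{ssec:lines}; your proposal never uses it, and without it the degree of $\pi$ is not controlled.
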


\begin{proof} Let $P\in X$ be a general point. Consider the 3--dimensional linear space $\Pi_P=\langle P,\Pi\rangle$. Let us look at the intersection of $\Pi_P$ with $X$. Since $X$ is cut out by quadrics (see Proposition \ref {prop:EP}), the intersection of $X$ with $\Pi_P$ consists of $\Pi$ plus a linear subspace $L$ of $\Pi_P$ containing $P$. The subspace $L$ cannot be a plane, since otherwise $X$ would be a scroll in planes, its general hyperplane section would be a scroll, and this is not possible. Moreover $L$ cannot be a line, by Fano's Lemma \ref {prop:lines}. So $L=\{P\}$ and the assertion follows. \end{proof}

Next we can prove the:

\begin{theorem}\label{thm:propl} Let $X\subset \p^{p+1}$, be an anticanonically embedded  Fano threefold of genus $p\geq 7$ that is non--trigonal and contains a plane $\Pi$. Then $X$ is rational.
\end{theorem}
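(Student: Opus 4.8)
The plan is to follow the strategy used at the end of the proof of Theorem \ref{thm:trig}: exhibit $X$ as birational to a threefold $X'\subset\p^{p-2}$ whose general hyperplane section is a rational surface, and then conclude by the rationality criterion of \cite{CF}, which applies precisely because $p-2\geq 5$. Since $p\geq 7>6$, $X$ is not in the plane quintic case of Proposition \ref{prop:EP}, and being non--trigonal it is cut out by quadrics; hence Lemma \ref{lem:propl} applies and projection from $\Pi$ gives a birational map $\pi\colon X\dasharrow X'\subset\p^{p-2}$, with $X'$ irreducible and, since $X$ is non--degenerate, non--degenerate as well.

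The key step is to understand the hyperplane sections of $X$ through $\Pi$. As $X$ has canonical singularities it is normal, so $\Sing(X)$ has dimension at most $1$ and $X$ is smooth at the general point $x$ of the surface $\Pi$. A general hyperplane $H\supset\Pi$ contains the projective tangent space $T_{X,x}$ only if it belongs to the linear subsystem of hyperplanes through $T_{X,x}$, which is a proper subspace of the system of hyperplanes through $\Pi$; hence for general such $H$ we have $T_{X,x}\cap H=T_{\Pi,x}$, so $H\cap X$ is smooth of dimension $2$ at $x$ and $\Pi$ occurs in it with multiplicity one. Therefore $H\cap X=\Pi+R$, with $R$ an effective divisor of degree $2p-3>0$ not containing $\Pi$ as a component. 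Since $R$ is exactly a general member of the mobile part of the linear system of hyperplane sections of $X$ through $\Pi$, and this system defines the birational map $\pi$, Bertini's theorem shows that $R$ is reduced and irreducible. Now Proposition \ref{prop:reduc} applies to $H\cap X=\Pi+R$ (with $A=\Pi$, $B=R$), and since $\Pi$ is rational we conclude that $R$ is rational.

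To finish, note that $\pi$ contracts $\Pi$, so $H$ maps to a hyperplane $\overline H$ of $\p^{p-2}$ and $\pi$ restricts to a birational map $R\dasharrow\overline H\cap X'$. Hence the general hyperplane section of $X'$ is birational to $R$, so it is rational; as $X'\subset\p^{p-2}$ with $p-2\geq 5$, the results of \cite{CF} give that $X'$ is rational, and therefore so is $X$.

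I expect the main subtlety to be in the middle step, namely checking that for a general $H\supset\Pi$ the residual surface $R$ is genuinely irreducible and reduced, so that Proposition \ref{prop:reduc} can be invoked in the form stated; this is exactly the point where one needs that $\pi$ is birational, not merely dominant, as provided by Lemma \ref{lem:propl}. The only other place the numerical hypothesis is used is the appeal to \cite{CF}, whose hypotheses require the ambient space of $X'$ to have dimension at least $5$; this is why the bound $p\geq 7$ is sharp for this method, the low genus cases $p\leq 6$ being those discussed in the examples at the end of the section.
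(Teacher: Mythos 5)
Your proposal is correct and follows essentially the same route as the paper: apply Lemma \ref{lem:propl} (noting, as you do, that $p\geq 7$ rules out the plane quintic case), decompose the general hyperplane section through $\Pi$ as $\Pi+F$ with $F$ irreducible and reduced, deduce the rationality of $F$ from Proposition \ref{prop:reduc}, and conclude via the criterion of \cite{CF} applied to $X'\subset\p^{p-2}$ with $p-2\geq 5$. Your intermediate verification that $\Pi$ occurs with multiplicity one and that the residual surface is irreducible and reduced is a useful expansion of a step the paper passes over quickly by attributing it directly to Lemma \ref{lem:propl}.
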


\begin{proof} Let $S$ be a general hyperplane section of $X$ containing $\Pi$. Then, by Lemma \ref 
{lem:propl}, we have $S=\Pi+F$, where $F$ is irreducible and reduced. By Proposition \ref
{prop:reduc}, $F$ is rational, and this implies that if $X'\subset \p^{p-2}$ is the image of the projection of $X$ from $\Pi$, then the general hyperplane section of $X'$ is rational. Since $p-2>4$, by the results in \cite {CF}, $X'$ is rational, and therefore, by Lemma \ref 
{lem:propl}, $X$ is rational.
\end{proof}

\begin{remark}\label{rem:three} Let $X\subset \p^{p+1}$ be an anticanonically embedded Fano threefold of genus $p\geq 5$ that is non--trigonal, does not present the plane quintic case, and contains a plane $\Pi$.
If $p=5$, then $X$ is rational by Lemma \ref 
{lem:propl}. If $p=6$ instead, it could be the case that the image $X'\subset \p^{4}$ of the projection of $X$ from $\Pi$ is a smooth cubic threefold, in which case $X$, that is birational to $X'$, is unirational but not rational. This can actually occur, as the following example (due to Fano, see \cite {Fa0}) shows.

  Let $X'\subset \p^4$ be a smooth cubic hypersurface and let $C\subset X'$ be a sufficiently general rational normal cubic curve contained in $X'$. Note that the span of $C$ is a hyperplane in $\p^4$, so that $C$ sits on a unique hyperplane section $Y$ of $X'$, that, by generality,  we may assume to be smooth.  
 The linear system $|\mathcal I_{C,\p^4}(2)|$ of quadrics passing through $C$ has dimension 7, and it determines a rational map $\phi: X'\dasharrow X\subset \p^7$, with $X$ a  threefold. It is immediate to see that $X$ is an anticanonically embedded Fano threefold of genus $6$. It contains a plane $\Pi$, that is the image of the cubic surface $Y$ via the map $\phi$. The projection of $X$ from $\Pi$ is exactly $X'$. The reader will check that $X$ has six double points (that are the images via $\phi$ of the six chords of $C$ lying on $Y$) all located on the plane $\Pi$. If $S$ denotes a hyperplane section of $X$, the linear system $|S-\Pi|$ of surfaces of degree 9 on $X$, cuts out on $\Pi$ the complete linear system of dimension 3 of cubics with six base points at the double points of $X$ on $\Pi$. This implies that there is a unique hyperplane that is tangent to $X$ all along $\Pi$. 
  
However, there are also  anticanonically embedded Fano threefolds of genus 6 containing a plane that are rational, as the following example (again due to Fano, see \cite {Fa0}) shows.

Let $X'\subset \p^4$ be a smooth quadric hypersurface. Let $Y$ be a smooth complete intersection of $X'$ with another quadric, so that $Y$ is a Del Pezzo surface of degree 4 in $\p^4$, that is the image of $\p^2$ via the linear system $(3;1^5)$ of cubic curves passing through 5  points in general position. We can consider a smooth curve $C$ of degree 9 and genus 6 on $Y$. This is the image of a general curve in the linear system $(6;2^4,1)$ of sextics with 4  double base points and one simple base points at the base points of the system $(3;1^5)$. The linear system $\mathcal L:=|\mathcal I_{C,X'}(3)|$ cut out on $X'$ by the cubic hypersurfaces passing through $C$ has dimension 7, and it determines a rational map $\psi: X'\dasharrow X\subset \p^7$, with $X$ an anticanonically embedded Fano threefold of genus $6$. It contains a plane $\Pi$, that is the image of the quartic surface $Y$ via the map $\psi$. The projection of $X$ from $\Pi$ is exactly $X'$. One checks that $X$ has five double points (that are the images via $\psi$ of the five three--secant lines of $C$ lying on $Y$) all on $\Pi$. If $S$ denotes a hyperplane section of $X$, the linear system $|S-\Pi|$ cuts out on $\Pi$ the complete linear system of dimension 4 of cubics with five base points at the double points of $X$ on $\Pi$. \end{remark}

\begin{remark}\label{rem:two} As we saw in Remark \ref {rem:three},  Theorem \ref {thm:propl}  is sharp, since there are Fano threefolds of genus $6$ cut out by quadrics, containing a plane and not rational, whereas, as we saw, Fano threefolds of genus 5, complete intersection of three quadrics, containing a plane are rational. More precisely (see \cite {Fa0}), consider a smooth cubic surface $Y$ in $\p^3$ that is the image of $\p^2$ via the linear system $(3;1^6)$ of cubic curves passing through 6  points in general position.  Consider  a smooth curve $C$ of degree 9 and genus 9 on $Y$.  
This is the image of a general curve in the linear system $(7;2^6)$ of septics with  double base points  at the base points of the system $(3;1^6)$. Consider the linear system $|\mathcal I_{C,\p^3}(4)|$, that has dimension 6. It determines a rational map $\eta: \p^3\dasharrow X\subset \p^6$, where $X$ is an anticanonically embedded Fano threefold of genus $5$ containing a plane $\Pi$ that is the image of the cubic $Y$. This $X$ is the most general Fano threefold of genus $5$ containing a plane $\Pi$. It has exactly six double points along the plane $\Pi$, corresponding to the contraction of the six 4--secant lines to $C$ contained in the surface $Y$.  

It is interesting to look at the cases $3\leq p\leq 4$, in which an anticanonically embedded Fano threefold is never cut out by quadrics.

The case $p=4$ is well known. Let $X\subset \p^5$ be a general  Fano threefold of genus 4 containing a plane $\Pi$. Remember that $X$ is a complete intersection of type $(2,3)$. The projection from $\Pi$ to a plane endowes $X$ with a birational structure of conic bundle. Actually (see \cite[Ex. 4.10.3]{B}), $X$ is birational to a \emph{standard conic bundle} (see \cite [Def. 3.1]{P}) on $\p^2$ with discriminant curve of degree 7, and therefore $X$ is not rational (see \cite [Thm. 4.9]{B}). 

The case $p=3$ of a quartic threefold $X$ in $\p^4$ containing a plane $\Pi$ is still rather mysterious. A general such quartic $X$ has exactly 9 ordinary double points along the plane $\Pi$ and no other singularity. Such an $X$ is unirational, but it is conjectured to be in general not rational (see \cite [Conj. 1.1]{G}). However there are special such quartics that are rational. Consider in fact the following example. 

Look again at a general anticanonically embedded Fano threefold $X$ of genus $5$ containing a plane $\Pi$. There are lines of $X$ not intersecting $\Pi$. If we think to $X$ as the image of the map $\eta: \p^3\dasharrow X\subset \p^6$ we considered above, we get such lines as images of trisecant lines to the curve $C$ of degree 9 and genus 9 on $Y$. If $\ell$ is such a line, project $X$ down to $\p^4$ from $\ell$. One checks that the image is a quartic hypersurface $X'$ containing a plane $\Pi'$ which is the image of $\Pi$, and it is rational as well as $X$. Note that, besides the plane $\Pi'$, $X'$ contains also a scroll, that is the image of the exceptional divisor of the blow--up of $X$ along $\ell$. 

Another interesting example is due to Fano  (see again \cite {Fa0}). Consider in $\p^6$ a smooth rational normal scroll $R$ of degree 3  in a codimension 2 linear subspace $T$. Then consider an anticanonically embedded Fano 3-fold $X$ of genus 5 containing $R$, obtained as the complete intersection of three general quadrics in $\p^6$ containing $R$. The pencil of hyperplanes through $T$ cuts out on $X$, off $R$, a pencil $\mathcal P$ of surfaces of degree 5, that are rational (see Proposition \ref {prop:reduc}), and are, in general, smooth Del Pezzo surfaces. The pencil $\mathcal P$ cuts out on $R$ a pencil of anticanonical curves (see again Proposition \ref {prop:reduc}), that are curves  of genus 1 and degree 5, with 8 base points that are double points for $X$. Now let us project $X$ down to $\p^4$ from a general line generator $\ell$ of $R$. The image of the projection is a quartic hypersurface $X'$, containing a plane $\Pi$ that is the image of $R$ under the projection. Along $\Pi$, the hypersurface $X'$ has, as expected, 9 double points, i.e., the images of the 8 double points of $X$ along $R$ and one further double point arising from the contraction of the line directrix of $R$. The pencil $\mathcal P$ is mapped via the projection to the pencil $\mathcal P'$ of cubic surfaces cut out on $X'$ off $\Pi$ by the hyperplanes through $\Pi$. Each quintic surface $\Phi\in \mathcal P$ intersects $\ell$ in two points, because an anticanonical curve of $R$ intersects $\ell$ in two points. In the projections these two points produce a pair of skew lines $r,r'$ on the image $\Phi'$ of $\Phi$. Since, as it is well known, $\Phi'$ is rational in the field of rationality of $r$ and $r'$, we conclude that $X'$ is rational, hence $X$ is rational. Of course $X'$ contains also  a scroll, that is the image of the exceptional divisor of the blow--up of $X$ along $\ell$. 
\end{remark}

\section{Two applications}\label{sec:appl}

In this section, inspired by arguments of Fano in \cite {Fa}, we will give two applications of Theorem \ref {thm:propl}  to rationality of non--hyperelliptic and non--trigonal  Fano threefolds of large enough genus. 
The first one is the following:

\begin{theorem}\label{thm:ratbig} Let $X\subset \p^{p+1}$ be an anticanonically embedded, non--trigonal Fano threefold of genus $p\geq 17$. Then $X$ is rational.
\end{theorem}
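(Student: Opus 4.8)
The plan is to transform $X$ birationally into a Fano threefold of smaller genus that is either trigonal or contains a plane, and then to invoke Theorem \ref{thm:trig} or Theorem \ref{thm:propl}. To begin with, if $-K_X$ is not very ample then $X$ is hyperelliptic and, since $p\geq 17\geq 10$, it is rational by Theorem \ref{thm:lops}; so I may assume $X\subset\p^{p+1}$ is anticanonically embedded, and, being non--trigonal with $p\geq 7$, cut out by quadrics (Proposition \ref{prop:EP}). If the lines of $X$ fill up a plane we are done by Fano's Lemma \ref{prop:lines} and Theorem \ref{thm:propl}; so I may also assume the family of lines contained in $X$ has dimension at most $1$, and in particular that these lines sweep out at most a surface, so that a general point of $X$ lies on no line of $X$.

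The central device is the projection of $X$ from a line $\ell\subset X$ lying in the smooth locus $\Sm(X)$ --- a dimension count produces such an $\ell$ --- together with Proposition \ref{prop:isk}, which restricts $N_{\ell,X}$ to the two cases of \eqref{eq:line}. Suppose first $\ell$ is \emph{ordinary}, $N_{\ell,X}\cong\O_\ell\oplus\O_\ell(-1)$. Then $p\geq 17\geq 12$ and Theorem \ref{thm:line} already gives the rationality of $X$; I would however record the underlying geometry for later reuse. Blowing up $\ell$ gives $f\colon\widetilde X\to X$ with exceptional divisor $E\cong\mathbb{F}_1$; the linear system $|f^{*}H-E|$ is base point free and defines the projection $\phi\colon\widetilde X\to X'\subset\p^{p-1}$ of $X$ from $\ell$, which is birational onto its image by the last sentence of the first paragraph; and a short computation identifies $(f^{*}H-E)|_{E}$ with the system on $\mathbb{F}_1$ contracting the negative section, so that $\phi(E)$ is a plane contained in $X'$. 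One then checks the hypotheses of Proposition \ref{prop:flik} for $X'$ --- a general hyperplane section of $X'$ is the projection of a K3 hyperplane section of $X$ through $\ell$, hence a K3 with at worst Du Val singularities; a general curve section of $X'$ is nondegenerate of genus $p-2$ and degree $2(p-2)-2$ in $\p^{p-3}$, hence canonically embedded; and $X'$ is not a cone --- so $X'$ is an anticanonically embedded Fano threefold of genus $p-2$. If $X'$ is trigonal it is rational by Theorem \ref{thm:trig} ($p-2\geq 8$), and otherwise it contains a plane and is rational by Theorem \ref{thm:propl} ($p-2\geq 7$); since $X$ is birational to $X'$, $X$ is rational.

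It remains to handle the configurations in which no ordinary line in $\Sm(X)$ is available. If every line of $X$ meets $\operatorname{Sing}(X)$, I would first project $X$ from a suitable singular point (which by \cite{P0} may be assumed to be compound Du Val, so of multiplicity $2$): the image is an anticanonically embedded Fano threefold of genus $p-1$ containing the image of the projectivised tangent cone, a quadric surface $Q$, a general ruling line $\ell'$ of which has $N_{\ell',Q}\cong\O_{\ell'}$ and $N_{Q,X'}|_{\ell'}\cong\O_{\ell'}(-1)$, so that $N_{\ell',X'}\cong\O_{\ell'}\oplus\O_{\ell'}(-1)$ and $\ell'$ is an ordinary line in $\Sm(X')$; one is then reduced to the previous case at the cost of one extra step. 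If instead every line of $X$ in $\Sm(X)$ is \emph{special}, $N_{\ell,X}\cong\O_\ell(-2)\oplus\O_\ell(1)$, the single projection from $\ell$ no longer works, since now $E\cong\mathbb{F}_3$ and $(f^{*}H-E)|_{E}$ has the negative section of $\mathbb{F}_3$ as a fixed component, so $\phi$ contracts $E$ to a curve and $X'$ acquires bad singularities; here I would instead use the double projection of $X$ from $\ell$ (or a preliminary projection from a conic of $X$ lying in $\Sm(X)$), exploiting that the summand $\O_\ell(1)$ of $N_{\ell,X}$ confines the tangent spaces of $X$ along $\ell$ to a linear subspace of small dimension, so that the projection drops the genus by a larger fixed amount at once and again lands on a lower--genus Fano threefold that is trigonal or contains a plane.

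Summing up, through at most a few projections, each lowering the genus by a fixed amount, $X$ of genus $p\geq 17$ is seen to be birational to a Fano threefold that is trigonal or contains a plane; the bound $p\geq 17$ is precisely what keeps every threefold occurring in the chain within reach of Theorems \ref{thm:lops}, \ref{thm:trig} and \ref{thm:propl} (genus $\geq 10$ in the hyperelliptic case, $\geq 8$ in the trigonal case, $\geq 7$ in the plane case). I expect the main obstacle to be exactly the verification that these hypotheses are preserved along the chain: one must show that each intermediate variety is again a linearly normal Gorenstein Fano threefold with at most canonical singularities, not a cone, and that the auxiliary line used at each step genuinely lies in its smooth locus with the expected normal bundle. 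Securing this local control over the centres of projection and the singularities they create is where Fano's original arguments are incomplete and where the real work of the proof lies.
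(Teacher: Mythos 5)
Your argument takes a genuinely different route from the paper, but it has an unfilled foundational gap. Everything in your main line of attack rests on producing a line $\ell\subset X$ contained in $\Sm(X)$, and you dismiss this with ``a dimension count produces such an $\ell$''. No such count is available: even for \emph{smooth} prime Fano threefolds the existence of lines is a nontrivial theorem (due to Shokurov), and for Gorenstein Fano threefolds with canonical singularities and no assumption on the Picard group it is not an off--the--shelf fact that $X$ contains any line at all, let alone one avoiding $\Sing(X)$. Your two fallback branches are likewise sketches rather than proofs: in the ``every line meets $\Sing(X)$'' branch you cannot simply ``assume'' the centre of projection is a cDV point of multiplicity $2$ with nondegenerate quadric tangent cone (the appeal to \cite{P0} leaves exceptional cases to check, and the normal--bundle computation for $\ell'\subset Q\subset X'$ presupposes smoothness of $X'$ along $\ell'$, which is exactly the kind of local control you concede is the hard part); and in the ``all lines special'' branch the double projection is announced but not carried out --- the claim that it ``drops the genus by a larger fixed amount and lands on a trigonal or plane--containing Fano threefold'' is precisely what would need proof. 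Note also that your main case is just Theorem \ref{thm:line}, which needs $p\geq 12$ and is itself an application of Theorem \ref{thm:propl}; that use is not circular, but it shows your scheme proves nothing new unless the existence and ordinariness of $\ell$ are secured.

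The paper avoids all of this by projecting not from a line but from the tangent space $T_{X,P}$ at a \emph{general} point $P\in X$, which always exists. Fano's Lemma and the quadric generation give that $T_{X,P}\cap X=\{P\}$ and that the second fundamental form $II_P$ is base point free, so the general tangent hyperplane section has an $A_1$ point at $P$; the classification of defective threefolds shows $X$ is not defective, so by Terracini the tangential projection is generically finite of degree $\nu\leq 2$. If $\nu=2$ one lands on a hyperelliptic Fano threefold of genus $p-4\geq 13$ (Theorem \ref{thm:lops}); if $\nu=1$ the image is a Fano threefold of genus $p-4$ containing the image $\Sigma$ of the exceptional plane under $II_P$, and the case analysis on $\dim(II_P)$ either forces trigonality or makes $\Sigma$ a Veronese surface, after which a second projection from the plane of a general conic of $\Sigma$ yields a hyperelliptic, trigonal, or plane--containing Fano threefold of genus at least $p-7\geq 10$. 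This is exactly where the bound $p\geq 17$ comes from. If you want to salvage your approach, the missing ingredient is a proof that a non--trigonal anticanonically embedded Fano threefold of genus $p\geq 17$ with canonical Gorenstein singularities contains an ordinary line in its smooth locus; absent that, the argument does not close.
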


\begin{proof} Let $P\in X$ be a general point. What we will do is to consider the \emph{tangential projection} at $P$, i.e., the projection of $X$ from the tangent space $T_{X,P}$ to $X$ at $P$. To do this we need a few preliminary facts. First of all we prove the:

\begin{claim}\label{cl:a} In the above set up, $T_{X,P}$ intersects $X$ only at $P$.
\end{claim}

\begin{proof}[Proof of the Claim] Suppose, by contradiction that $T_{X,P}$ intersects $X$ at some point $P'\neq P$. Then the line $\langle P,P'\rangle$ has intersection multiplicity 2 with $X$ at $P$ and contains also $P'$, so it sits on $X$, because $X$ is cut out by quadrics (see Proposition \ref {prop:EP}). This contradicts Fano's Lemma \ref {prop:lines}. 
\end{proof}

Consider next the \emph{second fundamental form} $II_P$ of $X$ at $P$ (see \cite [p. 263 and ff.]{GH}). Remember that $II_P$ can be considered as a linear system of conics on the exceptional divisor $E\cong \p^2$ of the blow--up of $X$ at $P$. This linear system has no base points. In fact the base points of the second fundamental form correspond to \emph{asymptotic directions} to $X$ at $P$, i.e., the directions of lines having with $X$ at $P$ intersection multiplicity at least 3 (see \cite[p. 63]{Ru}). There is no such a direction by Fano's Lemma \ref {prop:lines}. All this  implies the following:

\begin{claim}\label{cl:b} Consider the linear system $\mathcal H_P$ of hyperplane sections of $X$ that are tangent to $X$ at $P$, i.e., sections made with hyperplanes that contain $T_{X,P}$. Then the general member of $\mathcal H_P$ has a double point of type $A_1$ at $P$ and no other singularity except the intersections with the singular locus of $X$. Moreover three general members of $\mathcal H_P$ have intersection multiplicity 8 at $P$. Finally the strict transform of $\mathcal H_P$ on the blow--up of $X$ at $P$ has no base points and its general member is a K3 surface.
\end{claim}

Next we make the:

\begin{claim}\label{cl:c} The Fano threefold $X$ is not defective, i.e., the variety of secant lines to $X$ has the expected dimension 7. 
\end{claim}

	\begin{proof}[Proof of the Claim] Defective threefolds have been classified (see \cite [Thm. 1.1] {CC}). An irreducible, non degenerate, projective  threefold, not a cone,  in $\p^r$ is defective if and only if $r\geq 6$ and it is of one of the following types:\\ \begin{inparaenum}
\item [(1)] it sits in a 4--dimensional cone over a curve;\\
\item [(2)] $r=7$ and it sits in a 4--dimensional cone over a Veronese surface in $\p^5$;\\
\item [(3)] it is the 2--Veronese image of $\p^3$ in $\p^9$ or a projection of it in $\p^8$ or $\p^7$;\\
\item [(4)] $r=7$ and it is the hyperplane section of the Segre embedding of $\p^2\times \p^2$ in $\p^8$.\end{inparaenum}	

Since $p\geq 17$, our $X$ cannot be of types $(2),(3),(4)$. Suppose it is of type $(1)$. Then it sits in a 4--dimensional cone over a curve, that is swept out by a 1--dimensional family $\mathcal V$ of linear spaces of dimension 3. Since $X$ is cut out by quadrics,  the intersection of $X$ with the 
	spaces of $\mathcal V$ are surfaces of degree at most 1. Then the general  hyperplane sections of $X$ would be swept out by rational curves, a contradiction.  \end{proof}
	
	It is a consequence of the classical Terracini's Lemma that, if $X$ is not defective, then the  tangential projection of $X$ at a general point $P\in X$ is generically finite to its image (see \cite [Thm. 1.4.1, Prop. 1.4.10]{Ru}). We denote by $\tau: X\dasharrow X'\subset \p^{p-3}$ the tangential projection in question and we let $\nu$ be its degree. 

 \begin{claim}\label {cl:d} One has $\nu\leq 2$. If $\nu=2$, then $X'$ is a threefold of minimal degree $d-5$ in $\p^{p-3}$.
 \end{claim}
 
 \begin{proof}[Proof of the Claim] Since $\deg(X)=2p-2$, by Claim \ref {cl:b} we have that the degree of $X'$ is 
 $$
 \frac {2p-2-8}\nu\geq p-5
 $$
 which  implies that $\nu\leq 2$ and if $\nu=2$ the equality holds, proving the assertion.
 \end{proof}
 
 We first examine the case $\nu=2$. In this case the blow--up $\widetilde X$ of $X$ at $E$ turns out to be a hyperelliptic Fano threefold, of genus $p-4$. Under our hypotheses, we have $p-4\geq 10$, hence $\widetilde X$ is rational by Theorem \ref {thm:lops}, thus $X$ is rational, as wanted.
 
 Assume next $\nu=1$, i.e., the tangential projection is birational. 
 
 \begin{claim}\label{cl:fano} In this setting, $X'$ is an anticanonically embedded Fano threefold of genus $p-4$.
 \end{claim}
 
 \begin{proof}[Proof of the Claim \ref {cl:fano}] The general hyperplane section $S'$ of $X'$ is 
the birational image under the tangential projection $\tau$ of  a general tangent hyperplane section $S$ of $X$ at $P$, and this is a K3 surface with at worst Du Val singularities. Precisely, $S'$ is isomorphic to the (partial) desingularization of $S$ at the $A_1$ singular point that $S$ has at $P$. By Proposition \ref {prop:flik}, $X'$ is normal and to prove the claim, it suffices to show that $X'$ is not a cone. We argue by contradiction and suppose that $X'$ is a cone, with vertex a point $O$. 

We first note that $O$ must be a fundamental point of the inverse of the tangential projection $\tau: X\dasharrow X'$ from $P\in X$. Otherwise there would be a point $Q\in X$ such that $\tau(Q)=O$ and $\tau$ would induce an isomorphism between the neighborhoods of $Q\in X$ and $O\in X'$. This is a contradiction, because it would imply that $Q$ is a non--rational singularity of $X$, which is impossible by Proposition \ref {prop:flik} because $X$ is a Fano threefold. 

Since $O$ is a fundamental point of the inverse of the tangential projection, its inverse image $\gamma$ via $\tau$ is positive dimensional and connected, by Zariski's Main Theorem, and it is contained in the 4--dimensional linear space spanned by $T_{X,P}$ and any point of $\gamma$. By Claim \ref {cl:a}, $\gamma$ can only be a curve that intersects $T_{X,P}$ only at $P$. 

The partial desingularization $\widetilde X'\longrightarrow X'$ obtained by blowing up at $O$, has exceptional divisor over $O$ isomorphic to the general hyperplane section of $X'$, that is a K3 surface. On the other hand it is possible to obtain such a partial desingularization also by blowing up $X$ along $\gamma$. This gives a contradiction because in this way we would never get an exceptional divisor over $O$ birational to a K3 surface. \end{proof}

The tangential projection has an indeterminacy point at $P$, that we can resolve by blowing up $P$. So it becomes a morphism $\tau: \widetilde X\longrightarrow X'$. Then $X'$ contains the image $\Sigma$ of the exceptional divisor $E$ of $\widetilde X$ over $P$, via the second fundamental form $II_P$. Since $II_P$ has no base points, we have $5\geq \dim (II_P)\geq 2$, and we examine separately the various cases.
 
 The case $\dim (II_P)= 2$ does not happen. Indeed in this case $\Sigma$ would be a plane of quadruple points for $X'$, which is not possible, since the general hyperplane section of $X'$ is a K3 surface with at most Du Val singularities.
 
The case $\dim (II_P)= 3$ does not happen either. In fact in this case $\Sigma$ would be a \emph{Roman Steiner surface} of degree 4 in a 3--dimensional linear space, i.e., the projection in $\p^3$ of a Veronese surface in $\p^5$. Since $X'$ is cut out by quadrics and cubics (at most), this cannot be the case. 
  
Suppose $\dim (II_P)= 4$. Then $\Sigma$ is a projection  of a Veronese surface in a 4--dimensional linear space. Such a surface is not contained in any quadric. So any quadric containing $X'$ has to contain the whole 4--dimensional linear space containing $\Sigma$.  This implies  that $X'$ is trigonal. Since in our case $p-4\geq 8$, then, by Theorem \ref {thm:trig}, $X'$ is rational, and therefore also $X$ is rational.
   
Suppose finally that $\dim (II_P)= 5$. Then $\Sigma$ is a Veronese surface in a 5--dimensional linear space. Note that, by what we have seen  above, a general hyperplane section $S'$ of $X'$ (that is a K3 surface) intersects $\Sigma$ along an irreducible rational normal quartic curve $D$ along which $S'$ is smooth: $D$ is the $(-2)$--curve that is the resolution of the $A_1$ double point that the general tangent hyperplane section of $X$ at $P$ has at $P$. Accordingly, if there are singular points of $X'$ on $\Sigma$, they do not fill up a curve (because there are no such singular points of $D$), so there can be at most finitely many singular points of $X'$ along $\Sigma$.

Let now $\Gamma$ be a general conic of $\Sigma$. By the above considerations, $X'$ is smooth along $\Gamma$. 
 If $X'$ is trigonal, then as above $X'$ is rational and so also $X$ is rational.  So we can assume $X'$ is not trigonal, hence it is cut out by quadrics. Then the plane $\Pi$  spanned by $\Gamma$ intersects $X'$ scheme theoretically only along $\Gamma$, otherwise it would be contained in $X'$, and then, by the genericity of $\Gamma$ in $\Sigma$, $X'$ would contain the whole secant variety of $\Sigma$ that has dimension $4$, a contradiction. 
 
 \begin{claim}\label {cl:e} The general hyperplane section of $X'$ containing $\Gamma$ is smooth along $\Gamma$  and it is a K3 surface.
 \end{claim}
 
 \begin{proof}[Proof of the Claim]   We imitate the proof of \cite [Lemma (3.2)(i)]{Isk}. Consider the linear system $\mathcal H_\Gamma$ of hyperplane sections of $X'$ containing $\Gamma$. By what we have seen above, the base locus scheme of $\mathcal H_\Gamma$ is just $\Gamma$. By Bertini's theorem, the general surface in $\mathcal H_\Gamma$ is irreducible and it has singular points only at the intersection with the singular locus of $X'$ and perhaps along $\Gamma$. However we will show that the general surface in $\mathcal H_\Gamma$ is smooth along $\Gamma$, and this will prove the claim. 
 
Consider the projection $\pi: X'\subset \p^{p-3} \dasharrow X''\subset \p^{p-6}$ of $X$ from the plane $\Pi$ spanned by $\Gamma$, that is given by the linear system $\mathcal H_\Gamma$. Let $\sigma: \p'\longrightarrow \p^{p-3}$ be the blow--up of $\Gamma$. Then the composition $\pi\circ \sigma$ is  a morphism $f: \p'\to \p^{p-6}$. If $\sigma': \widetilde X'\to X'$ is the blow--up of $X'$ along $\Gamma$, then $\widetilde X' \subset \p'$ and $\widetilde X'$ is smooth along the exceptional divisor $\mathcal E$ of the blow--up. We have the morphism $\tilde f$ that is the restriction of $f$ to $\widetilde X'$. If $H'$ denotes the general hyperplane section of $X'$, then $\tilde f$ is given by the linear system $|(\sigma')^*(H')-\mathcal E|$. This linear system has no base points, because it is the restriction to $\widetilde X'$ of a linear system on $\p'$ with the same properties, i.e., the linear system that determines the morphism $f$. The restriction of 
 $|(\sigma')^*(H')-\mathcal E|$ to $\mathcal E$ has also no base points, and it consists of sections of the map $\sigma': \mathcal E \longrightarrow \Gamma$. Hence the general surface $S''\in |(\sigma')^*(H')-\mathcal E|$ intersects $\mathcal E$ along a smooth, irreducible section $\Gamma'$ of  $\sigma': \mathcal E \longrightarrow \Gamma$. Then the map $\sigma: S''\to S':=\sigma(S'')$ is an isomorphism, and since $S'$ has no singular points along $\Gamma'$, then $S'$ is smooth along $\Gamma$, as wanted.  
 \end{proof}
 
 The projection $\pi: X'\dasharrow X''$ is generically finite. In fact, let $P'\in X'$ be a general point and let us consider the linear space of dimension 3 generated by $\Pi$ and $P'$. If the fibre of $P'$ via $\pi$ is positive dimensional, since it has to be cut out in $\langle \Pi,P'\rangle$ by quadrics passing through $\Gamma$, it could only be either a quadric or a plane or a line or a conic. The first two cases are impossible because then the general hyperplane section of $X'$ containing $\Gamma$ would be swept out by rational curves, a contradiction to Claim \ref {cl:e}. It is also impossible that the fibre is a line, because of Fano's Lemma \ref {prop:lines}. If the general fibre is a conic we also come to a contradiction because then the general hyperplane section of $X'$ containing $\Gamma$ would be swept out by conics, a contradiction again to Claim \ref {cl:e}. 
 
 Let $\widetilde X'$ be the blow--up of $X'$ along $\Gamma$ and let $\mathcal E$ be the exceptional divisor. Let $\mathcal H$ be the transform on $\widetilde X'$ of a general hyperplane section of $X'$. Then one computes $(\mathcal H-\mathcal E)^3=2p-16$. Let  $\mu$ be the degree of the map 
 $\pi: X'\dasharrow X''$. Then the degree of $X''$ is
 $$
 \frac {2p-16}\mu\geq p-8
 $$
 that, given the hypothesis on $p$, implies $\mu\leq 2$. Suppose $\mu=2$, that yields that $X''$ has degree $p-8$, i.e., $X''$ is a threefold of minimal degree and $\widetilde X'$ is a hyperelliptic Fano threefold of genus $p-7$. Since $p\geq 17$, $\widetilde X'$ is rational by Theorem \ref {thm:lops}, and therefore $X$ is rational, as wanted.
 
 Suppose next that $\mu=1$.
 
 \begin{claim}\label{cl:polp} If $\mu=1$, then $X''$ is an anticanonically embedded Fano threefold.
 \end{claim}
 
 \begin{proof}[Proof of Claim \ref {cl:polp}] By Claim \ref {cl:e}, the general hyperplane section  of $X''$ is a K3 surface with at worst Du Val singularities. By Proposition \ref {prop:flik}, $X''$ is normal and to prove the claim it suffices to show that $X''$ has only rational singularities. To prove this, let $O\in X''$ be any singular point. If the inverse of the projection $\pi: X'\dasharrow X''$ is defined at $O$, then $O$ is locally isomorphic to a point of $X'$, that is a rational singularity by Proposition \ref {prop:flik}, and we are done. Suppose that  the inverse of  $\pi: X'\dasharrow X''$ is not defined at $O$. Then the positive dimensional inverse image $Z$ of $O$ via $\pi$ sits in the 3--dimensional linear space spanned by the plane $\Pi$ and any point of $Z$. As we have seen above, $Z$ can only be either a quadric or a plane or a line or a conic. In any event, the singularity of $X''$ at $O$ is rational, as wanted. \end{proof}
 
 Now, if $X''$  is trigonal, it is rational because $p\geq 17$ (see Theorem \ref {thm:trig}). Suppose then $X''$ is non--trigonal. It contains the image of $\Sigma$ via the projection $\pi: X'\dasharrow X''$, and this image is a plane. Then $X''$ is rational because of Theorem \ref {thm:propl}. So $X$ is rational as desired. \end{proof}
 
 The second application is the following:
 
 \begin{theorem}\label{thm:line} Let $X\subset \p^{p+1}$ be an anticanonically embedded Fano threefold of genus $p\geq 12$ containing an ordinary line $\ell$ in its smooth locus. Then $X$ is rational.
 \end{theorem}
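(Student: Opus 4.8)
The plan is to reach a Fano threefold containing a plane, to which Theorem \ref{thm:propl} applies, by performing \emph{two} successive linear projections from lines.

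If $X$ is trigonal we are done by Theorem \ref{thm:trig}, since $p\geq 12\geq 8$; so assume $X$ is non--trigonal, hence (by Proposition \ref{prop:EP}, as $p>6$) cut out by quadrics. Let $\sigma\colon\widetilde X\to X$ be the blow--up of $\ell$, with exceptional divisor $E$; since $\ell$ is ordinary (see Proposition \ref{prop:isk}) one has $E\cong\FF_1$. The projection of $X$ from $\ell$ is the morphism $\phi\colon\widetilde X\to X'\subset\p^{p-1}$ given by $|-K_{\widetilde X}|=|H-E|$, which I must first check is base point free — its base locus can only lie on $E$ and is empty, because the hyperplane sections of $X$ through $\ell$ cut on $E$ the sections of the ruling $E\to\ell$. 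The first key point is that $\phi$ is birational: if its degree were at least $2$, then for a general $P\in X$ the plane $\langle\ell,P\rangle$, being cut out on $X$ by conics each of which contains $\ell$, would meet $X$ along $\ell$ together with a further line $L_P\ni P$ meeting $\ell$; as $P$ varies the lines $L_P$ would form an irreducible family of lines covering $X$, which is impossible by Fano's Lemma \ref{prop:lines}.

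Next I claim $X'$ is an anticanonically embedded Fano threefold of genus $p-2$. Indeed $\deg X'=(H-E)^3=2p-6=2(p-2)-2$; a general hyperplane section $S'$ of $X'$ is the image under $\phi$ of a general hyperplane section $S$ of $X$ through $\ell$ (a K3 surface with Du Val singularities containing $\ell$), obtained by contracting finitely many $(-2)$--curves, hence is again a K3 surface with Du Val singularities; and a standard cohomology computation on the K3 surface $S'$ shows its general hyperplane section is a canonical curve. By Proposition \ref{prop:flik}, $X'$ is then normal, Gorenstein, $\O_{X'}(K_{X'})\cong\O_{X'}(-1)$, and is either a Fano threefold of genus $p-2$ or a cone. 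The cone case is ruled out exactly as in Claim \ref{cl:fano}: the vertex would be a fundamental point of $\phi^{-1}$, whose (positive--dimensional, connected) preimage in $X$ could only be a curve, and blowing up a curve can never produce an exceptional divisor birational to the K3 surface one obtains by blowing up the vertex of the cone. Since $X'$ is anticanonically embedded, $-K_{X'}$ is very ample, so $X'$ is not hyperelliptic; if $X'$ is trigonal we are done by Theorem \ref{thm:trig} (as $p-2\geq 8$), so assume $X'$ is non--trigonal, hence cut out by quadrics.

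Now $X'$ contains the surface $\Sigma:=\phi(E)$. Computing $(H-E)|_E$ on $E\cong\FF_1$ (with $C_0$ the negative section and $f$ a ruling, one finds $(H-E)|_E=C_0+2f$) one sees that $\phi|_E$ contracts no curve, so $\Sigma$ is a ruled cubic surface and $\phi|_E\colon\FF_1\to\Sigma$ identifies the rulings with a one--dimensional family of lines of $X'$. Pick a general such line $f'\subset\Sigma$; it lies in the smooth locus of $X'$ (since $\phi$ is an isomorphism over $X'$ minus finitely many points and $\widetilde X$ is smooth along $E$), and from $0\to N_{f',\Sigma}\to N_{f',X'}\to N_{\Sigma,X'}|_{f'}\to 0$ together with $\deg N_{f',X'}=-1$ one gets $N_{f',X'}\cong\O_{f'}\oplus\O_{f'}(-1)$, so $f'$ is again an ordinary line. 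Projecting $X'$ from $f'$ and arguing exactly as in the previous two paragraphs — the projection is birational by the Fano's Lemma argument, and its image $X''\subset\p^{p-3}$ is, by Proposition \ref{prop:flik} and the no--cone argument, an anticanonically embedded Fano threefold of genus $p-4$ — we obtain $X''$. The decisive gain is that the projection from $f'$ maps $\Sigma$ onto a \emph{plane}: the ruled cubic surface $\Sigma\cong\FF_1$, projected from one of its rulings $f'$, has its directrix contracted and is sent birationally onto $\p^2$. Thus $X''$ is an anticanonically embedded Fano threefold of genus $p-4\geq 8$ containing a plane: if $X''$ is trigonal it is rational by Theorem \ref{thm:trig}, and otherwise it is rational by Theorem \ref{thm:propl}. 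Hence $X''$, and therefore $X'$ and $X$, are rational.

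The step I expect to be most delicate is showing that the two projections produce genuine anticanonically embedded Fano threefolds — that is, making the K3/hyperplane--section analysis precise and, above all, excluding the possibility that a projection yields a cone — together with the routine but fiddly base--point--freeness and birationality verifications; by contrast, once $\ell$ is known to have normal bundle $\O_\ell\oplus\O_\ell(-1)$, the identification of $\Sigma$ as a cubic scroll and of its image as a plane is straightforward.
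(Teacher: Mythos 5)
Your proposal is correct and follows essentially the same route as the paper: project from $\ell$ (birational by Fano's Lemma), identify the image as an anticanonically embedded Fano threefold of genus $p-2$ containing the cubic scroll $\Sigma$ coming from the exceptional divisor (this is where ordinariness of $\ell$ enters, via Proposition \ref{prop:isk}), then project from a ruling line of $\Sigma$ to reach a Fano threefold of genus $p-4\geq 8$ containing a plane, and conclude with Theorems \ref{thm:trig} and \ref{thm:propl}. The only additions beyond the paper's argument (the normal-bundle computation showing $f'$ is again ordinary, and the explicit $\FF_1$ computations) are harmless, and the points you flag as delicate are exactly the ones the paper handles via Claims \ref{cl:e}, \ref{cl:fano} and \ref{cl:polp}.
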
 
 
 \begin{proof} Because of Theorem \ref {thm:trig} we may assume that $X$ is non--trigonal. Then the projection $\sigma: X\dasharrow X'\subset \p^{p-1}$ from the line $\ell$ is birational to the image $X'$. Indeed, let $P\in X$ be a general point and let $\Pi$ be the plane spanned by $\ell$ and $P$. The fibre of $\sigma$ through $P$ is cut out on $\Pi$, off $\ell$ by quadrics containing $\ell$. So this fibre can either be a line or the single point $P$. The former case cannot happen because of Fano's Lemma \ref {prop:lines}. This proves that $\sigma: X\dasharrow X'$ is birational. Similarly as in Claim  \ref {cl:e} (or arguing as in \cite [Lemma (3.2)(i)]{Isk}) one proves that the general hyperplane section of $X$ containing $\ell$ is smooth along $\ell$  and it is a K3 surface.
Then, arguing as in Claim \ref {cl:polp}, we see that  $X'$ is also an anticanonically embedded  Fano threefold of genus $p-2$. If it is trigonal, then it is rational by Theorem \ref {thm:trig} and $X$ is also rational. Suppose then it is not trigonal. Let $\widetilde X$ be the blow--up of $X$ along the line $\ell$ and let $R$ be the exceptional divisor. The rational map $\sigma$  extends to a morphism $\tilde \sigma: \widetilde X \longrightarrow X'$, and we can consider the image $R'$ of $R$ via $\tilde \sigma$. Since a general codimension 2 linear space passing through $\ell$ intersects $X$, off $\ell$, along a curve that intersects $\ell$
in three points, we see that $\deg (R')=3$ and, by Proposition \ref {prop:isk}, $R'\cong R$ is a smooth rational normal scroll of degree 3 in its 4--dimensional linear space. If $S$ is a general hyperplane section of $X$ containing $\ell$, its strict transform $\widetilde S\cong S$ on $\widetilde X$ intersects $R$ along a hyperplane section $C$ of $R$ in its 4--dimensional linear ambient space. Since $S$ is smooth along $\ell$, $\widetilde S$ is smooth along $C$ and $S'=\sigma(S)$ is smooth along its intersection with $R'$ that, in the isomorphism $R\cong R'$, is identified with the hyperplane section $C$ of $R'$.  This implies that, if there are singular points of $X'$ along $R'$, they are only finitely many. 

Now consider a general line  $\ell'$ of the ruling on $R'$.  By the above argument, $\ell'$ is contained in the smooth locus of $X'$. Since we are assuming $X'$ non-trigonal, as above the projection $\sigma': X'\dasharrow X''\subset \p^{p-3}$ is birational and, by the same argument as above,  $X''$ is a Fano threefold of genus $p-4$. If it is trigonal, then it is rational because of Theorem  \ref {thm:trig}, hence $X$ is rational. If it is not trigonal, it contains the image of $R'$ via the projection $\sigma'$, and this image is a plane. Then $X''$ is rational by Theorem \ref {thm:propl}, hence $X$ is rational again, as desired. \end{proof}

\end{document}